\newcounter{case}
\newcounter{subcase}[case]
\renewcommand{\thesubcase}{\thecase\alph{subcase}}
\newcommand{\case}[1]{\medskip\noindent%
   \refstepcounter{case}\textbf{Case \thecase:} \ \textit{#1}\smallskip}
\newcommand{\subcase}[1]{\smallskip\noindent%
   \refstepcounter{subcase}\textbf{Case \thesubcase:} \ \textit{#1}\smallskip}
\newtheorem{theorem}{Theorem}[section]
\newtheorem{lemma}[theorem]{Lemma}
\newtheorem{conjecture}[theorem]{Conjecture}
\newtheorem{corollary}[theorem]{Corollary}
\newtheorem{remark}[theorem]{Remark}
\newtheorem{problem}{Open Problem}
\renewcommand{\ge}{\geqslant}
\renewcommand{\le}{\leqslant}
\newcommand\etal{~\emph{et al.}~}
\newcommand\defic{{\rm def}}
\def\eref#1{$(\ref{#1})$}
\def\sref#1{\S$\ref{#1}$}
\def\lref#1{Lemma~$\ref{#1}$}
\def\tref#1{Theorem~$\ref{#1}$}
\def\cref#1{Corollary~$\ref{#1}$}
\def\cjref#1{Conjecture~$\ref{#1}$}
\def\fref#1{Figure~$\ref{#1}$}
\newcommand{\R}{\mathbb{R}}
\newcommand{\cc}{\mathcal{C}}
\newcommand{\ci}{\mathcal{I}}
\newcommand{\ca}{\mathcal{A}}
\newcommand{\pp}[1]{\ensuremath{\mathcal{P}_{#1}}}
\newcommand{\tpp}[1]{\ensuremath{\mathcal{P}_{#1}'}}
\title{Multipartite hypergraphs achieving
equality in Ryser's conjecture\thanks{Research supported by 
ARC grant DP120100197, OTKA Grant NN.114614,
BSF grant 2018106 and ISF grant 1581/12.
}
}
\author{
Ron Aharoni\\
\small Department of Mathematics, Technion, Haifa 32000, Israel\\
\and
J\'anos Bar\'at\thanks{Current address: MTA-ELTE Geometric and Algebraic Combinatorics Research Group} \ 
and Ian M. Wanless\\
\small School of Mathematical Sciences, Monash University\\[-0.8ex]
\small Clayton, Vic 3800, Australia.
}
\date{}
\begin{document}

\maketitle

\begin{abstract}
  A famous conjecture of Ryser \cite{ryser} is that in an $r$-partite
  hypergraph the covering number is at most $r-1$ times the matching
  number.  If true, this is known to be sharp for $r$ for which there
  exists a projective plane of order $r-1$.  We show that the
  conjecture, if true, is also sharp for the smallest previously open
  value, namely $r=7$.  For $r\in\{6,7\}$, we find the minimal number
  $f(r)$ of edges in an intersecting $r$-partite hypergraph that has
  covering number at least $r-1$.  We find that $f(r)$ is achieved
  only by linear hypergraphs for $r\le5$, but that this is not the
  case for $r\in\{6,7\}$.  We also improve the general lower bound on
  $f(r)$, showing that $f(r)\ge 3.052r+O(1)$.

  We show that a stronger form of Ryser's conjecture that was used to
  prove the $r=3$ case fails for all $r>3$.  We also prove
  a fractional version of the following stronger form of Ryser's
  conjecture: in an $r$-partite hypergraph there exists a set $S$ of
  size at most $r-1$, contained either in one side of the hypergraph
  or in an edge, whose removal reduces the matching number by $1$.

%  We also study Ryser's conjecture for $r$-uniform hypergraphs with a
%  vertex set partitioned into sets $V_i$ of sizes $a_i$ ($i=1, \ldots
%  k$), where $|e \cap V_i| =a_i$ for every $i \leq k$ (such a
%  hypergraph is said to be {\em $(a_1, \ldots ,a_k)$-partitioned}). We
%  show that Ryser's conjecture fails for $\vec{a}$-partitioned
%  hypergraphs, for $\vec{a}=(2,2),~(2,2,2),~(2,2,2,2)$ and
%  $\vec{a}=(p^k, s)$ for $p$ prime and $s \le p^k$. This partially
%  answers a question of Tuza \cite{tuza}.

%We study some questions concerning covers of intersecting multipartite
%hypergraphs.  We exhibit a $7$-partite intersecting hypergraph with
%covering number $6$, thereby settling the problem whether sharpness of
%the conjectured $r-1$ bound on the covering number can occur when the
%$r$-uniform projective plane does not exist.  We also study the
%minimal number of edges in an intersecting $r$-partite hypergraph that
%have covering number $r-1$, for some small values of $r$.

\bigskip
Keywords: intersecting hypergraph; covering number; Ryser's conjecture;
fractional cover.
\end{abstract}  %MSC 05C65 (hypergraphs)

\section{Introduction}

For a hypergraph $H$ we use $|H|$ to denote the number of edges
(also called lines) and $|V(H)|$ for the number of vertices. A
hypergraph is {\it $r$-uniform} if every edge has $r$ vertices on
it. We use $\pp{r}$ to denote any $r$-uniform projective plane. In
the standard terminology of projective planes, $\pp{r}$ has {\it
order} $r-1$.

A {\em $k$-cover} of a hypergraph is a set of $k$ vertices meeting
every edge of the hypergraph.  The {\it covering number} $\tau(H)$
of a hypergraph $H$ is the minimum $k$ for which there is a
$k$-cover of $H$. A {\em matching} is a set of disjoint edges, and
the {\it matching number} $\nu (H)$ of a hypergraph $H$ is the
maximum size of a matching consisting of edges of $H$.  A hypergraph
with $\nu(H)=1$ is said to be {\it intersecting}.  An intersecting
hypergraph is {\it linear} (also called {\it almost disjoint}) if
each pair of distinct edges meets in exactly one vertex.  In an
$r$-uniform hypergraph $\tau \le r \nu$, since a cover can be
obtained from the union of all edges in a matching that is maximal
with respect to containment.  This bound is sharp, as shown by
$\pp{r}$, or by the union of disjoint copies of $\pp{r}$.  Sharpness
is also attained by many other examples, such as the set of all
subsets of size $r$ in a ground set of size $kr-1$,
%$\binom{[kr-1]}{r}$,
which has $\nu=k-1$ and $\tau=(k-1)r$.

A hypergraph is {\it $r$-partite} if its vertex set $V$ can be
partitioned into $r$ sets $V_1,\dots, V_r$, called the {\it sides} of
the hypergraph, so that every edge contains precisely one vertex from
each side.  In particular, $r$-partite hypergraphs are $r$-uniform.
Ryser~\cite{ryser} conjectured the following:

\begin{conjecture}
In an $r$-partite hypergraph, $\tau\le(r-1)\nu$.
\end{conjecture}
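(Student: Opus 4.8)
The plan is to prove the bound by induction on the matching number $\nu=\nu(H)$, the case $\nu=0$ being vacuous. The inductive step rests on a deletion claim that is the natural strengthening driving the argument: in any $r$-partite hypergraph $H$ with $\nu(H)\ge1$ there is a vertex set $S$ with $|S|\le r-1$ for which $\nu(H-S)=\nu(H)-1$. Granting this, the union of $S$ with a minimum cover of $H-S$ is a cover of $H$, so
\[
\tau(H)\le |S|+\tau(H-S)\le (r-1)+(r-1)\bigl(\nu(H)-1\bigr)=(r-1)\nu(H),
\]
where the middle inequality applies the induction hypothesis to $H-S$, whose matching number is $\nu(H)-1$. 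The whole problem is thereby transferred to producing such an $S$, and the two natural candidates are an edge of a maximum matching with one vertex deleted (which has size exactly $r-1$), or a carefully chosen subset of a single side $V_i$.

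First I would isolate the decisive instance, namely $\nu(H)=1$, where $H$ is intersecting. Here the deletion claim asks for $S$ with $|S|\le r-1$ and $\nu(H-S)=0$, that is, for a cover of size $r-1$; so this instance is precisely the assertion $\tau\le r-1$ for intersecting $r$-partite hypergraphs, and it is the engine of the induction. To attack it I would follow the topological route that settled $r=3$: treat a putative cover as an independent system of representatives chosen from the sides $V_1,\dots,V_r$, organise the partial covers into an independence complex, and invoke a connectivity criterion of Aharoni--Haxell type. The intended dichotomy is that sufficiently high topological connectivity of this complex forces either a full cover of size $r-1$, or else a partial transversal that extends to two disjoint edges, contradicting the intersecting hypothesis.

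For a fractional sanity check I would verify the linear relaxation at the outset: the fractional bound $\tau^{*}(H)\le(r-1)\nu(H)$ is known to hold, so the target inequality is never violated after relaxing to the LP, and the entire difficulty is integrality. This localises the obstacle sharply. The hard part is exactly this integrality gap in the intersecting case: the topological connectivity one can currently guarantee for the relevant independence complex grows too slowly, once $r\ge4$, to force an \emph{integral} cover of size $r-1$, and no purely combinatorial substitute is known. Equivalently, the deletion claim resists being established with the uniform size bound $|S|\le r-1$ beyond the smallest cases, so the induction does not close in general. Overcoming this — by sharpening the connectivity estimates for the $r$-dimensional analogue, or by finding an entirely structural rule for choosing the deleted set $S$ — is the crux on which a full proof depends.
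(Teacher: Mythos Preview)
The statement you are attempting to prove is Ryser's conjecture itself, which is \emph{open}: the paper presents it as Conjecture~1.1 and explicitly says ``very little is known about this conjecture''. There is no proof in the paper to compare your proposal against, because no proof exists for general $r$.

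Your proposal is not a proof either, and you acknowledge this yourself. The deletion claim you wish to use --- that every $r$-partite hypergraph contains a set $S$ of at most $r-1$ vertices with $\nu(H-S)=\nu(H)-1$ --- is precisely Lov\'asz's strengthening of Ryser's conjecture (Conjecture~\ref{lovasz} in the paper), which is itself open and is known to imply Ryser's conjecture by exactly the induction you describe. So your ``reduction'' replaces one open problem by a harder one. The topological/connectivity machinery you invoke is indeed what settles the $r=3$ case, but as you correctly note in your final paragraph, the connectivity bounds available for the relevant independence complexes are too weak for $r\ge 4$ to force an integral cover of size $r-1$. That is not a technical wrinkle to be smoothed out; it is the entire difficulty of the conjecture. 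Your write-up is an accurate survey of why the natural inductive attack stalls, but it is not a proof, and the paper does not claim one.
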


Very little is known about this conjecture.  In
\cite{ryser3} it was proved for $r=3$. For $r=4,5$ it was shown in
\cite{pennyscott} that there exists $\epsilon>0$ such that that $\tau
<(r-\epsilon)\nu$ in every $r$-partite hypergraph.

There is only one family of $r$-partite hypergraphs known to attain
Ryser's bound: subhypergraphs of truncated projective planes. Denoted
by $\tpp{r}$, the truncated projective plane of uniformity $r$ is
obtained from $\pp{r}$ by the removal of a single vertex $v$ and the
edges containing $v$. The sides of $\tpp{r}$ are the sets of vertices
other than $v$ on the edges of $\pp{r}$ containing $v$.  To achieve equality in
Ryser's conjecture it is enough to take only a small proportion of the
edges of $\tpp{r}$.  Kahn~\cite{kahn} proved:

\begin{theorem}\label{t:random_pg}
A random set of $22r\log r$ lines in an $r$-uniform projective plane
satisfies $\tau =r$ with probability tending to $1$ as $r\rightarrow\infty$.
\end{theorem}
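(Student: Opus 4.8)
The plan is to bound $\tau(\mathcal{L})$ above and below separately, where $\mathcal{L}$ denotes the random $m$-subset of the lines of $\pp r$ with $m=22r\log r$. The upper bound $\tau(\mathcal{L})\le r$ holds deterministically: any two lines of $\pp r$ meet, so the $r$ points of any one line of $\pp r$ form a cover of $\mathcal{L}$ (and $\nu(\mathcal{L})=1$). For the lower bound, write $N=r^2-r+1$ for the number of lines, and for a set $C$ of points let $U(C)$ be the set of lines of $\pp r$ disjoint from $C$, so that the event ``$C$ meets every line of $\mathcal{L}$'' is exactly $\mathcal{L}\cap U(C)=\emptyset$. For the uniform $m$-set this event has probability $\binom{N-|U(C)|}{m}\big/\binom{N}{m}\le(1-|U(C)|/N)^m\le e^{-p|U(C)|}$, where $p=m/N=22\log r/r+O(1/r)$. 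Since $\tau(\mathcal{L})\le r-1$ forces some $(r-1)$-set of points to be a cover, a union bound gives
\[
\Pr\bigl[\tau(\mathcal{L})\le r-1\bigr]\ \le\ \sum_{t\ge r-1}N_t\,e^{-pt},\qquad N_t:=\#\{C:|C|=r-1,\ |U(C)|=t\},
\]
and everything reduces to bounding the numbers $N_t$.

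First I would establish the elementary geometric facts. The lines of $U(C)$ cover precisely the $N-(r-1)=r^2-2r+2$ points outside $C$ (through each such point at least one of the $r$ lines avoids the $r-1$ points of $C$), so $|U(C)|\ge (r^2-2r+2)/r$, giving $|U(C)|\ge r-1$ for every $(r-1)$-set $C$, with equality exactly when $C$ is a set of $r-1$ collinear points (in which case $U(C)$ is a pencil of $r-1$ concurrent lines). In particular $N_{r-1}\le Nr<r^3$, so these extremal covers contribute at most $r^3e^{-(r-1)p}=r^{3-22+o(1)}=o(1)$. The substantial ingredient is a stability-plus-counting estimate for the intermediate range $r-1<t\lesssim r^2/11$: a set $C$ with $|U(C)|=t$ must be ``almost collinear'', lying on some line of $\pp r$ except for at most $j$ stray points where $(j+1)(r-1-j)\lesssim t$, so $j=O(t/r)$; and for each $j$ there are at most $N\binom{r}{j+1}N^{j}\le r^{3j+3}$ sets of that shape, whence $N_t\le r^{O(t/r)}$ in this range. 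Using the trivial bound $N_t\le\binom{N}{r-1}\le r^{2r}$ once $t$ exceeds $r^2/11$, the sum splits as $\sum_{t\ge r-1}N_te^{-pt}\le\sum_{t\ge r-1}r^{O(t/r)}e^{-pt}+\sum_{t>r^2/11}r^{2r}e^{-pt}$, and since $p=22\log r/r$ comfortably dominates both $O(\log r/r)$ and $(2r\log r)/(r^2/11)$, each is a convergent geometric-type sum tending to $0$; the precise value $22$ is not essential, any sufficiently large constant works. This yields $\Pr[\tau(\mathcal{L})\le r-1]=o(1)$, so $\tau(\mathcal{L})=r$ with probability tending to $1$.

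The main obstacle is that intermediate estimate: turning ``$|U(C)|$ small'' into a genuine structural dichotomy, identifying the extremal configurations (a long collinear block together with a bounded number of points in general position off the line) and, above all, pinning down the rate $(j+1)(r-1-j)$ at which $|U(C)|$ grows with the number $j$ of stray points — this is exactly what keeps $N_t$ well below $e^{pt}$ throughout the range where the trivial bound is useless. Equivalently, this is a quantitative statement about complements of near-blocking-sets in $\pp r$, or about how few lines of $\pp r$ can cover all but $r-1$ of the points, and getting it cleanly (rather than with wasteful bounds that would demand a much larger constant than $22$) is the delicate part. Everything else — the probability computation, the union bound over $(r-1)$-sets, and the final summation — is routine once this is in hand.
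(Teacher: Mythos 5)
A preliminary remark: the paper does not prove this statement at all --- Theorem~1.2 is quoted as a result of Kahn \cite{kahn} --- so there is no in-paper proof to compare against, and I can only assess your argument on its own terms. Your outer frame is sound and its routine parts are correct: the deterministic bound $\tau\le r$, the estimate $\Pr[\mathcal{L}\cap U(C)=\emptyset]\le e^{-p|U(C)|}$, the union bound over $(r-1)$-sets, the bound $|U(C)|\ge r-1$ with the collinear/pencil extremal example, and the disposal of the range $t\gtrsim r^2/11$ by the trivial count of $(r-1)$-sets. But there is a genuine gap exactly where you yourself locate ``the delicate part'': the stability claim that $|U(C)|=t$ with $t\lesssim r^2/11$ forces $C$ to lie on a line except for $j=O(t/r)$ stray points is not proved, and as stated it is false. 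Your inequality only gives $(j+1)(r-1-j)\le t$, where $r-1-j$ is the largest collinear subset of $C$; since the two factors sum to $r$, this allows either $j=O(t/r)$ or $r-1-j=O(t/r)$, and the second, spread-out branch genuinely occurs. Concretely, for $|C|=r-1$ one has the identity $|U(C)|=1+\sum_{\ell}\bigl(|\ell\cap C|-1\bigr)^{+}$ (there are exactly $r(r-1)$ point--line incidences available). Take $r-1=m^2$ with $m$ a prime power and let $C$ be the $m^2$ points of an affine Baer subplane of $PG(2,m^2)$: every line meets $C$ in at most $m+1=O(\sqrt r)$ points, so $C$ is nowhere near collinear, yet only the $m^2+m+1$ extended subplane lines meet $C$ more than once, giving $|U(C)|\le 1+(m^2+m+1)m=O(r^{3/2})$, squarely inside your intermediate range. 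Hence the dichotomy underlying your bound $N_t\le r^{O(t/r)}$ fails; such exotic near-extremal families happen to be few, so the final union bound may well survive, but that requires an argument (a classification or counting of \emph{all} $(r-1)$-sets missing few lines, not just the almost-collinear ones) which you have not supplied.

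Note also that the difficulty does not vanish by enlarging the constant: with $m=Ar\log r$ the trivial count only handles $t\gtrsim 2r^2/A$, so for any fixed $A$ you still need a bound of the shape $N_t\le e^{ct\log r/r}$ with $c<A$ throughout $r-1\le t\lesssim r^2$, i.e.\ precisely the missing lemma with different constants. (Two smaller points: your characterisation of equality $|U(C)|=r-1$ is true but does not follow from the covering count you give --- it needs something like the identity above --- and at the cut-off $t\approx r^2/11$ your ``comfortably dominates'' is actually an equality, so the cut must be placed slightly below the threshold.) As it stands, then, this is a plausible plan whose central counting lemma is both unproven and, in the form you propose to prove it, incorrect; it does not yet constitute a proof of the theorem.
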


This implies that:

\begin{theorem}\label{t:random_pp}
A random set of $22r\log r$ lines in $\tpp{r}$ satisfies $\tau\ge r-1$
with probability tending to $1$ as $r\rightarrow \infty$.
\end{theorem}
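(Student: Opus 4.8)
The plan is to derive this as a corollary of Theorem~\ref{t:random_pg}, by transferring information about random line-sets in $\pp r$ to random line-sets in $\tpp r$. Fix a copy of $\pp r$ with the distinguished vertex $v$ whose deletion produces $\tpp r$, so that the edges of $\tpp r$ are exactly the lines of $\pp r$ missing $v$, regarded as vertex sets. First I would record the elementary fact that for any family $L'$ of lines of $\pp r$ none of which passes through $v$, one has $\tau_{\pp r}(L')=\tau_{\tpp r}(L')$: a minimum cover of $L'$ may always be taken to avoid $v$, since $v$ lies on no line of $L'$.

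Next, set $m=22r\log r$ and let $L$ be a uniformly random $m$-subset of the lines of $\pp r$ (legitimate for large $r$, since $m<r^2-r+1$). By Theorem~\ref{t:random_pg}, $\tau_{\pp r}(L)=r$ with probability tending to $1$. Write $L_v$ for the set of lines of $L$ through $v$ and put $L'=L\setminus L_v$. Because $v$ meets every line of $L_v$, adjoining $v$ to any cover of $L'$ yields a cover of $L$, whence $\tau_{\pp r}(L)\le\tau_{\pp r}(L')+1$; so with probability tending to $1$ we have $\tau_{\pp r}(L')\ge r-1$, and this equals $\tau_{\tpp r}(L')$ by the first step. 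The point worth stressing here is that deleting the (at most $r$) lines through $v$ does \emph{not} cost up to $r$ in the covering number: those lines are simultaneously covered by the single vertex $v$, so the loss is at most $1$.

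Finally, I would bridge the gap between $L'$, which is a random set of lines of $\tpp r$ of \emph{random} cardinality $m-|L_v|\le m$, and a set of exactly $m$ lines. Conditioned on $|L_v|=j$, the family $L'$ is a uniformly random $(m-j)$-subset of the edges of $\tpp r$. The event $\{\tau\ge r-1\}$ is monotone under the addition of edges, and a uniformly random $m$-subset stochastically dominates a uniformly random $(m-j)$-subset for every $j\ge0$ (couple them so that the smaller lies inside the larger); hence the probability that a uniformly random $m$-subset $M$ of the edges of $\tpp r$ satisfies $\tau(M)\ge r-1$ is at least $\Pr[\tau_{\tpp r}(L')\ge r-1]$, which tends to $1$. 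That completes the deduction.

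I expect the only genuinely delicate point to be this last comparison: verifying that restricting a uniform random $m$-set of lines of $\pp r$ to those avoiding $v$ gives, after conditioning on its size, a uniform random subset of the edges of $\tpp r$, and then using monotonicity to upgrade from a random size at most $m$ to the fixed size $m$. Once Theorem~\ref{t:random_pg} is invoked as a black box, everything else is routine.
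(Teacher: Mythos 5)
Your proof is correct and takes essentially the route the paper intends: the paper gives no explicit argument beyond ``this implies,'' and the implicit reasoning is exactly yours---removing the deleted vertex $v$ and the lines through it costs at most $1$ in the covering number, since $v$ alone covers all those lines, while covers of lines avoiding $v$ never need $v$. Your conditioning and monotone-coupling step (to pass from the restriction of a uniform random line set of $\pp{r}$ to a uniform $22r\log r$-subset of $\tpp{r}$) correctly supplies the one distributional detail the paper leaves unstated.
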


Solving an old problem of Erd\H{o}s and Lov\'asz, Kahn~\cite{kahn} proved
that there exist $r$-uniform intersecting hypergraphs with linearly many
edges, satisfying $\tau=r$. Mansour\etal\cite{yuster} conjectured that
something similar is true in the $r$-partite case. They defined
$f(r)$ to be the smallest integer $k$ for which there exists an
$r$-partite intersecting hypergraph $H$ with $k$ edges and $\tau(H)\ge
r-1$, and conjectured that $f(r)\le O(r)$.
If the hypergraphs constructed
in \cite{kahn} were part of $\pp{r}$, then this result would imply a
linear bound on $f(r)$ for infinitely many values, but unfortunately
this is not the case.
It is not even clear whether $f(r)$ exists for all $r$, since it
is conceivable that there is no hypergraph with the required
properties.  If $r-1$ is a prime power then $\tpp{r}$ is known to exist, providing proof
 that $f(r)$ is defined, but examples for other $r$ were
previously unknown. The first case for which the existence of $f(r)$ was previously unknown is $r=7$. In \sref{s:f(r)} we prove that $f(7)$
exists and that in fact $f(7)=17$. We also calculate $f(6)$ and
improve the general lower bound on $f(r)$.
We show that for $r\le5$ all hypergraphs attaining $f(r)$ are linear.
In contrast, there are non-linear hypergraphs that achieve $f(6)$ and $f(7)$.
We finish \sref{s:f(r)} by stating a number of open problems.

In \sref{s:frac} we consider various possible strengthenings of
Ryser's conjecture. In particular, a conjecture specifying the form of
the desired cover, which in the intersecting case is that the cover of
size $r-1$ can be assumed to be contained either in a side or in an
edge.  We show that a ``biased'' version of the conjecture, which is
true for $r=3$, is false for larger $r$. However, its fractional
formulation is true in a strong sense that provides also a fractional
version of the above conjecture on the form of the covers. We also
prove a fractional version of a strengthening of Ryser's conjecture
suggested by Lov\'asz.

\section{How many edges  are needed to achieve $\boldsymbol{\tau \ge r-1}$?}\label{s:f(r)}

In this section we study the function $f(r)$, defined in the
introduction.  In particular, we establish the values of $f(6)$ and
$f(7)$ and improve the lower bound on $f(r)$ proved in \cite{yuster}.
It is likely that Ryser's conjecture (if true) is sharp for all values
of $r$, but so far this has been shown only for $r$ for which $r-1$ is
a prime power.  The example below shows sharpness for the first open
case, $r=7$.

\begin{equation}\label{e:tight7}
%% DO NOT reformat. Later reference uses this format
\begin{array}{lllllllll}
1111111&& 1235354&& 2313664&& 4412343&& 6142564\\
2154322&& 1344433&& 3514555&& 4551234&& \\
3332221&& 1424266&& 3655163&& 5123253&& \\
4325512&& 2222135&& 4136465&& 5361365&&
\end{array}
%Was
%1111111& 1234344& 2631436& 4255412& 6614425\\
%2222221& 1523453& 3551245& 4613264\\
%3333122& 1642642& 3665351& 5512332\\
%4441323& 2315543& 4132555& 5636213
%\end{align*}
\end{equation}
Here each sequence describes one edge, where the $i$-th symbol in the
sequence indicates
which vertex is taken in the $i$-th side $V_i$. The above example has
$17$ edges and $42$ vertices, $6$ on each side. We used a computer
to check that it has no $5$-cover, from which it follows that
$\tau=6=r-1$.

\begin{remark}
Independently, Abu-Khazneh and Pokrovskiy \cite{AP}   showed that
$f(7)$ exists. The bound they obtained was  $f(7)\le 22$.
\end{remark}

Our next aim is to study $f(r)$ for some small values of $r$. 
A common concept will be the idea of a {\it greedy cover}, which
is a cover obtained iteratively by including a vertex of maximum
degree in the hypergraph induced by the lines that have not yet
been covered. Note that in an intersecting hypergraph $H$ with more
than one line there is always a vertex of degree at least $2$.
Hence there is always a greedy cover of size at most 
$\lceil|H|/2\rceil$. If we have information about the degrees
of vertices in $H$ we can usually find a smaller greedy cover.
The next few lemmas will also recur in our calculations.

\begin{lemma}\label{l:stndrdcnt}
  Let $H$ be an intersecting $r$-partite hypergraph with
  covering number $\tau$. Suppose $H$ has maximum degree no more than
  $4$ and for $i=1,2,3,4$ let $x_i$ denote the number of vertices of
  degree $i$ in $H$. Then
\begin{align}
x_1+x_4&\ge \binom{|H|}{2}+3r\tau-2r|H|\label{e:x1x4},\\
x_3+3x_4&\ge \binom{|H|}{2}+r\tau-r|H|\label{e:x3x4}.
\end{align}
In each of \eref{e:x1x4} and \eref{e:x3x4} equality holds if and only if
$H$ is linear and has exactly $\tau$ vertices on each side.
\end{lemma}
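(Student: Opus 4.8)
The plan is to distill the hypotheses into three elementary counts, each expressed through $x_1,\dots,x_4$, and then take the two linear combinations of them whose left-hand sides collapse to $x_1+x_4$ and to $x_3+3x_4$.

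First I would record the degree identity: an $r$-partite hypergraph is $r$-uniform, so $\sum_v d(v)=r|H|$, and since every degree lies in $\{0,1,2,3,4\}$ this says
\[
x_1+2x_2+3x_3+4x_4=r|H|.
\]
Next comes a pair count exploiting that $H$ is intersecting: each of the $\binom{|H|}{2}$ pairs of edges meets in at least one vertex, so $\sum_v\binom{d(v)}{2}\ge\binom{|H|}{2}$, which with the degree bound reads
\[
x_2+3x_3+6x_4\ge\binom{|H|}{2},
\]
and here equality holds exactly when no two edges share more than one vertex, i.e.\ when $H$ is linear. Finally I would use the $r$-partite structure directly: each side $V_i$ meets every edge in exactly one vertex, so $V_i$ is a cover and $|V_i|\ge\tau$; summing over $i$ gives $|V(H)|\ge r\tau$, that is
\[
x_1+x_2+x_3+x_4\ge r\tau,
\]
with equality exactly when every side has exactly $\tau$ vertices. (One takes $V(H)$ to be the union of the edges; a minimum cover may be assumed to avoid isolated vertices, so nothing is lost.)

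Given these, \eref{e:x1x4} is obtained as (pair count)$\,-\,2\cdot$(degree identity)$\,+\,3\cdot$(side bound), and \eref{e:x3x4} as (pair count)$\,-\,$(degree identity)$\,+\,$(side bound); in each case a direct check shows the left side simplifies to the claimed combination of the $x_i$ and the right side to the claimed bound. Since the degree identity contributes an equality while the pair count and the side bound enter both combinations with strictly positive coefficients, the combined inequality is tight if and only if both of those inequalities are tight, which by the above means precisely that $H$ is linear and has exactly $\tau$ vertices on each side.

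I do not expect a genuine obstacle: the work is bookkeeping. The two points that need a little care are verifying that the stated linear combinations really do reduce to $x_1+x_4$ and $x_3+3x_4$ — equivalently, solving the small linear system that pins down the coefficients — and confirming in the equality analysis that the two genuine inequalities occur with strictly positive multiples, so that slack in either one strictly weakens the bound.
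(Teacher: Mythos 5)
Your proposal is correct and follows essentially the same route as the paper: the same three counts (incidence identity, pairwise-intersection count, and the side-is-a-cover bound), combined with exactly the coefficients the paper uses, with the same equality analysis. No issues.
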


\begin{proof}
Since each side is a cover there are at least $\tau$ vertices in every side.
Therefore
\begin{equation}\label{points}
 x_1+x_2+x_3+x_4\ge r\tau.
\end{equation}
Counting the pairs $(v,e)$ such that vertex $v$ lies on line
$e$ yields,
\begin{equation}\label{v,e}
x_1+2x_2+3x_3+4x_4=r|H|.
\end{equation}
Also, every two edges meet, which requires that
\begin{equation}\label{intersect}
 x_2+3x_3+6x_4\ge \binom{|H|}{2}.
\end{equation}
Now summing \eref{points} and \eref{intersect} and subtracting
\eref{v,e} we get \eref{e:x3x4}.
Similarly, three times \eref{points} plus \eref{intersect} minus twice
\eref{v,e} gives \eref{e:x1x4}. In both cases, equality requires equality in
\eref{points} and \eref{intersect}. The former means that each side has
exactly $\tau$ vertices and the latter means that $H$ is linear.
\end{proof}

\begin{lemma}\label{l:sidecover}
  Let $V_1$ be one side of an $r$-partite intersecting hypergraph
  $H$. Suppose that $V_1$ contains $y_1$ vertices of degree $1$ and
  $y_2$ vertices of degree at least $2$. Then
  $(y_1+1)/2+y_2\ge\tau(H)$.
\end{lemma}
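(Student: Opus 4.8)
The plan is to build a cover of $H$ greedily, starting from the degree-$1$ vertices on $V_1$, and to control its size by a simple pairing argument together with the fact that every edge of $H$ meets every other edge. First I would observe that each of the $y_2$ vertices of degree at least $2$ in $V_1$ can simply be thrown into our cover; this already handles every edge whose $V_1$-vertex has degree $\ge 2$, contributing $y_2$ to the bound. What remains is to cover the edges of $H$ whose vertex in $V_1$ has degree exactly $1$; there are exactly $y_1$ such edges, since distinct degree-$1$ vertices lie on distinct edges, and I will call this set of edges $F$, so $|F|\le y_1$.

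Next I would cover $F$ using at most $\lceil y_1/2\rceil=(y_1+1)/2$ further vertices, chosen outside $V_1$. The key point is that $H$ is intersecting, so any two edges of $F$ share a vertex, and that shared vertex cannot lie in $V_1$ (the $V_1$-vertices of two distinct edges of $F$ are distinct degree-$1$ vertices, hence non-adjacent). Thus the edges of $F$, restricted to $V\setminus V_1$, still form an intersecting family, and in an intersecting family with $m$ edges there is always a vertex of degree at least $2$ (indeed, pick any edge $e$; every other edge meets $e$, so by pigeonhole some vertex of $e$ lies on at least $\lceil(m-1)/(r-1)\rceil+1\ge 2$ edges once $m\ge 2$). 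Repeatedly selecting such a vertex and deleting the (at least two) edges through it reduces $|F|$ by at least $2$ each step, so after at most $\lceil y_1/2\rceil$ steps all of $F$ is covered. Combining the two parts, $H$ has a cover of size at most $(y_1+1)/2+y_2$, which gives $\tau(H)\le(y_1+1)/2+y_2$ as claimed.

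I do not anticipate a serious obstacle here; the only mild subtlety is making the ``$\lceil y_1/2\rceil$'' bookkeeping precise when $y_1$ is odd (the last step may remove only a single remaining edge, but that still costs just one vertex and $\lceil y_1/2\rceil\le(y_1+1)/2$ absorbs it) and ensuring the greedily chosen high-degree vertices really can be taken outside $V_1$, which follows from the degree-$1$ observation above. One should also note the degenerate cases $y_1=0$ (then the bound is just $y_2\ge\tau$, immediate since $V_1$ is a cover of size $y_1+y_2$) and $F=\emptyset$, both of which are handled trivially. It is worth recording that this is exactly the ``greedy cover'' philosophy mentioned before the lemma, specialised to one side of the hypergraph.
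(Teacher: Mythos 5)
Your proof is correct and follows essentially the same route as the paper: use the $y_2$ vertices of degree at least $2$ in $V_1$ to cover all lines except the $y_1$ lines through degree-$1$ vertices of $V_1$, and cover those greedily with at most $\lceil y_1/2\rceil\le(y_1+1)/2$ vertices using the fact that an intersecting family with at least two edges has a vertex of degree at least $2$. The extra observations (that the greedy vertices can be taken outside $V_1$, and the pigeonhole count) are harmless but not needed.
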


\begin{proof}
  The lines through vertices of degree $1$ in $V_1$ can be greedily
  covered by $\lfloor(y_1+1)/2\rfloor$ vertices.  The remaining lines of
  $H$ can be covered by the vertices of degree at least $2$ in $V_1$.
\end{proof}

\begin{lemma}\label{l:1fact}
  Let $r$ be odd and suppose that $H$ is an intersecting $r$-partite
  hypergraph satisfying $|H|\le r$ and $\tau(H)\ge(r+1)/2$.  Then
\begin{itemize}
\item $|H|=r$
\item $\tau(H)=(r+1)/2$.
\item Each side of $H$ consists of one vertex of degree $1$ and
$(r-1)/2$ vertices of degree $2$.
\item Each line of $H$ contains one vertex of degree $1$ and
$r-1$ vertices of degree $2$.
\item $H$ is linear.
\end{itemize}
\end{lemma}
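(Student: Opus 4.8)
The plan is to combine the two counting lemmas already proved with the elementary bound $|H|\le r$ and the hypothesis $\tau\ge(r+1)/2$, and squeeze out equality everywhere. First I would bound $|H|$: since $H$ is intersecting with more than one line (it cannot be a single line, as then $\tau=1<(r+1)/2$ for $r\ge 3$), a greedy cover has size at most $\lceil|H|/2\rceil$, so $(r+1)/2\le\tau\le\lceil|H|/2\rceil\le\lceil r/2\rceil=(r+1)/2$ using that $r$ is odd. Hence $|H|=r$ and $\tau=(r+1)/2$, giving the first two bullet points. I would also note this forces the greedy cover to be as wasteful as possible, which already strongly constrains the degree sequence.

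Next I would establish the maximum degree is at most $2$. A vertex $v$ of degree $d$ lies on $d$ lines; removing $v$ and the $d$ lines through it leaves an intersecting $r$-partite hypergraph $H'$ with $|H'|=r-d$ lines, and $\tau(H)\le \tau(H')+1$, so $\tau(H')\ge(r-1)/2$. But greedily, $\tau(H')\le\lceil(r-d)/2\rceil$, which forces $d\le 2$ (for $d\ge3$ we get $\lceil(r-d)/2\rceil\le(r-2)/2<(r-1)/2$; one must be slightly careful with the parities but it works out, possibly using $d\le 2$ directly or an extra greedy step when $r-d$ is even). So every vertex has degree $1$ or $2$, and in the notation of \lref{l:stndrdcnt} we have $x_3=x_4=0$.

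Now I would feed $x_3=x_4=0$, $|H|=r$, $\tau=(r+1)/2$ into \eref{e:x3x4}: the left side is $0$, while the right side is $\binom{r}{2}+r\cdot\frac{r+1}{2}-r^2 = \frac{r^2-r}{2}+\frac{r^2+r}{2}-r^2 = 0$. So equality holds in \eref{e:x3x4}, and by the equality clause of \lref{l:stndrdcnt}, $H$ is linear and has exactly $\tau=(r+1)/2$ vertices on each side. That gives the ``linear'' bullet and half of the ``each side'' bullet. For the degree structure: with $x_3=x_4=0$ the counting identities \eref{points} (now an equality, $x_1+x_2=r\tau$) and \eref{v,e} ($x_1+2x_2=r|H|=r^2$) give $x_2 = r^2 - r\tau = r(r-1)/2$ and $x_1 = 2r\tau - r^2 = r$. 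Since there are $r$ sides each with exactly $(r+1)/2$ vertices totalling $r(r+1)/2 = x_1+x_2$ points, and $x_1=r$, each side must contain exactly one degree-$1$ vertex and exactly $(r-1)/2$ degree-$2$ vertices (if some side had no degree-$1$ vertex, another would need two, but then that side would have $(r+1)/2$ vertices of which at least two have degree $1$; a short averaging argument, or the observation that $x_1=r$ equals the number of sides, pins it down — actually one needs to rule out a side with $\ge 2$ degree-$1$ vertices, which follows because then \lref{l:sidecover} or a direct greedy argument on that side gives a cover smaller than $\tau$; I would use \lref{l:sidecover}: a side with $y_1$ degree-$1$ and $y_2$ degree-$\ge2$ vertices satisfies $(y_1+1)/2+y_2\ge\tau=(r+1)/2$ with $y_1+y_2=(r+1)/2$, forcing $y_1\le 1$). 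Finally, for the per-line structure: each of the $r$ lines meets each side in one vertex, and summing degree-$1$ incidences, the $r$ degree-$1$ vertices (one per side) contribute exactly one incidence each, so the $r$ lines contain $r$ degree-$1$ vertex-slots in total; since every line is an $r$-set and linearity plus a counting check shows no line can contain two degree-$1$ vertices (two lines through a common degree-$2$ vertex would be needed to ``use up'' the others — more directly, if a line had two degree-$1$ vertices then some line has none, but a line with no degree-$1$ vertex consists of $r$ degree-$2$ vertices, each shared with exactly one other line, accounting for $r$ other lines, i.e. all of $H$, which combined with the side count gives a contradiction), each line contains exactly one degree-$1$ vertex and $r-1$ degree-$2$ vertices.

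The main obstacle I anticipate is not any single deep step but the bookkeeping around parities in the maximum-degree argument and the final distributional claims (``one degree-$1$ vertex per side'' and ``per line''); the cleanest route is to extract \emph{equality} in \eref{e:x3x4} as early as possible, since \lref{l:stndrdcnt} then hands over linearity and the side-sizes for free, after which \lref{l:sidecover} applied to each side, together with the global counts $x_1=r$, $x_2=r(r-1)/2$, forces the rest by a pigeonhole with no slack. I would double-check the small odd case $r=3$ separately if needed, though the argument should be uniform for all odd $r\ge 3$.
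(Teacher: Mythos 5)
Your proof is correct, but the structural half takes a genuinely different route from the paper's. Both arguments open the same way: greedy covers force $|H|=r$, $\tau=(r+1)/2$ and maximum degree $2$ (your deletion argument for a degree-$d$ vertex is the paper's greedy step in a different guise, and the parity bookkeeping does work out). From there the paper stays elementary: since $r$ is odd and the degree sum on each side equals $r$, every side has a degree-$1$ vertex, so $x_1\ge r$; a line with two degree-$1$ vertices could meet at most $r-2<r-1$ other lines, so $x_1=r$ with exactly one degree-$1$ vertex per line and hence per side; finally $x_2=r(r-1)/2$ matches the number $\binom{r}{2}$ of line pairs, which yields linearity. You instead note that with $x_3=x_4=0$ the bound \eref{e:x3x4} is tight, so the equality clause of \lref{l:stndrdcnt} hands you linearity and the side sizes $(r+1)/2$ at once; \lref{l:sidecover} (or, even more simply, the two equations $y_1+y_2=(r+1)/2$ and $y_1+2y_2=r$ on each side, which force $y_1=1$ outright) then gives one degree-$1$ vertex per side, and your count of the neighbours of a hypothetical all-degree-$2$ line settles the per-line claim. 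What your route buys is that linearity drops out of an already-proved lemma, at the cost of a slightly longer chase for the line structure; the paper never invokes the equality clause, gets the line structure first via the parity of $r$, and finishes with the intersection count. One small wording slip in your last step: a line with no degree-$1$ vertex would meet $r$ distinct \emph{other} lines while only $r-1$ exist --- that is already the contradiction, with no need for anything ``combined with the side count''; the counting you wrote contains exactly this.
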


\begin{proof}
  If $|H|\le r-1$ or if $H$ has a vertex of degree greater than $2$
  then $H$ has a greedy cover using at most $(r-1)/2$ vertices. Hence
  $|H|=r$ and the maximum degree in $H$ is $2$. Even so, there is a
  greedy $(r+1)/2$-cover, so $\tau=(r+1)/2$.  Given that $r$ is odd,
  each side has at least one vertex of degree $1$, so $x_1\ge r$.  If
  any line contains two vertices of degree 1 then it cannot meet the
  other $r-1$ lines without breaching the maximum degree, hence
  $x_1=r$. The claims about degree sequences of sides and of lines
  follow. Also, counting intersections we have $r(r-1)/2$ pairs of
  lines and $r(r-1)/2$ degree 2 vertices, so $H$ is linear.
\end{proof}

Although we will not need it, it is possible to be even more precise
about the structure of $H$ in \lref{l:1fact}. From what we have shown
so far, it is clear that an extra line could be added through all of
the degree 1 vertices.  We would then have a 2-regular linear
intersecting $r$-partite hypergraph $H'$ with $r+1$ lines. Such a
hypergraph corresponds to a 1-factorisation of the complete graph
$K_{r+1}$. Each line in $H'$ represents a vertex of $K_{r+1}$ and each
side of $H'$ represents a $1$-factor, with each vertex of $H'$
specifying a different pair of vertices of $K_{r+1}$. Moreover, if we
take any $1$-factorisation of $K_{r+1}$, it will build an $H'$ as just
described, from which we can remove any one line to get a hypergraph
$H$ satisfying the conditions in \lref{l:1fact}.

In \cite{yuster} it was shown that
%\begin{theorem}\label{3-sqrt18}
$f(r)\ge (3-\frac{1}{\sqrt{18}})r(1-o(1))\approx 2.764r(1-o(1))$.
%\end{theorem}
We next improve this asymptotic lower bound.

\begin{theorem}\label{t:mindeg}
  Suppose $H$ is an intersecting $r$-partite hypergraph with covering
  number $\tau$ and maximum degree $\Delta$. Then
\[
\Delta\ge
\begin{cases}
2&\text{if $|H|\ge2$},\\
3&\text{if $\tau> (r+1)/2$},\\
4&\text{if $\tau> 2r/3+1$},\\
5&\text{if $\tau\ge (25r+23)/32$}.\\
\end{cases}
\]
\end{theorem}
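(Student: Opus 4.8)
The statement has four clauses; the first is essentially immediate (an intersecting hypergraph with at least two edges has a vertex of degree at least $2$, as already noted before \lref{l:stndrdcnt}), so I would concentrate on the three remaining bounds, each asserting that if $\Delta$ were small then $\tau$ would be forced to be small. The uniform strategy is a greedy-cover argument refined by the degree information, exactly in the spirit of \lref{l:1fact} and the ``greedy cover'' discussion: pick a vertex $v$ of maximum degree $\Delta$, cover the (at most) $\Delta$ edges through it, pass to the induced hypergraph on the remaining uncovered edges, and iterate, at each stage bounding how many edges one vertex can kill. If at each step a single vertex covers at least $d$ of the remaining edges, then starting from $|H|$ edges we get $\tau\le\lceil\log_{\,d/(d-1)}|H|\rceil$-type bounds; the point is to combine this with the counting inequalities of \lref{l:stndrdcnt} (for the $\Delta\le4$ cases) and an analogous count for $\Delta\le5$.

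\textbf{The $\Delta\le 2$ and $\Delta\le 3$ clauses.} For $\Delta\le 2$: if every vertex has degree $\le 2$, then in an intersecting hypergraph a greedy cover takes $\lceil|H|/2\rceil$ edges at worst, but more to the point, the clause ``$\Delta\ge3$ if $\tau>(r+1)/2$'' is almost exactly the contrapositive of \lref{l:1fact}. Indeed, if $\Delta\le 2$ then first one shows $|H|\le r$: with maximum degree $2$ and $|H|\ge r+1$ edges, two edges through a common vertex already meet all others only if $r$ is small, so a short argument (or the greedy count $\lceil|H|/2\rceil$ against the trivial $\tau\le r$) forces $|H|\le r$, and then \lref{l:1fact} gives $\tau\le(r+1)/2$ — wait, \lref{l:1fact} needs $r$ odd. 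For general $r$ I would argue directly: with $\Delta\le2$, a maximal matching-free greedy selection covers $2$ new edges per step except possibly the last, so $\tau\le\lceil|H|/2\rceil\le\lceil r/2\rceil$, and a small refinement using that some side has a degree-$1$ vertex (when $r$ is odd) or a parity/counting trick (when $r$ is even) pushes this to $\le(r+1)/2$. So the contrapositive yields $\Delta\ge3$ when $\tau>(r+1)/2$.

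\textbf{The $\Delta\le 4$ and $\Delta\le 5$ clauses — the main work.} Here I would feed $\Delta\le4$ into \lref{l:stndrdcnt}. Rewrite \eref{e:x1x4} and \eref{e:x3x4} as inequalities bounding $\tau$: from \eref{e:x3x4}, $r\tau\le x_3+3x_4+r|H|-\binom{|H|}{2}\le r|H|$ (crudely), but the useful direction is the other one: since $x_1+x_2+x_3+x_4=$ (number of vertices) and $x_1+2x_2+3x_3+4x_4=r|H|$, bounding $x_i$ and using that a greedy cover on a $\Delta\le4$ intersecting hypergraph covers at least $4$, then at least $3$, then at least $2$ edges per step, one gets $\tau\le|H|/4+\text{(lower-order corrections)}$; combining with the intersection inequality \eref{intersect} rearranged to bound $|H|$ from below in terms of $r$ and $\tau$ gives an inequality in $r$ and $\tau$ alone that fails once $\tau>2r/3+1$. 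The constant $2/3$ should drop out of balancing the ``$4$ per step'' greedy rate against the quadratic lower bound on $|H|$ from pairwise intersections within a side. The $\Delta\le5$ clause, with the sharper constant $25/32$, needs the analogue of \lref{l:stndrdcnt} for maximum degree $5$ — i.e.\ the same three identities \eref{points}, \eref{v,e}, \eref{intersect} with an extra variable $x_5$ and the coefficient $\binom{5}{2}=10$ on $x_5$ in the intersection count — together with a greedy cover that removes $5$ edges at the first step and at least $4$, $3$, $2$ thereafter; optimising the resulting linear program in $x_1,\dots,x_5$ subject to the three constraints and to $\tau\ge(25r+23)/32$ should produce a contradiction. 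The fraction $(25r+23)/32$ and the ``$+23$'' strongly suggest this is a finite case analysis of how the first few greedy steps interact with a side of size $\tau$, so I would expect to split into cases according to the degree of the maximum-degree vertex's side and chase the bookkeeping.

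\textbf{Main obstacle.} The genuine difficulty is the $\Delta\le 5$ case: the greedy bound alone gives only $\tau\lesssim|H|/5$ plus corrections, and one must extract a clean inequality between $r$ and $\tau$, which requires carefully tracking the second-order terms (the $\binom{|H|}{2}$ on one side against the $r\tau$ on the other) rather than the crude estimates that suffice for $\Delta\le3$. Getting the precise constant $25/32$ (rather than something weaker like $5r/6$) will hinge on not wasting anything in the greedy steps — e.g.\ noting that after covering the $\le5$ edges through a degree-$5$ vertex, the residual hypergraph is still intersecting so its own maximum-degree vertex again covers many edges — and on using \lref{l:sidecover} to handle the degree-$1$ vertices of a side optimally. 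I would expect the write-up to be a somewhat intricate but elementary optimisation, with the $\Delta\le4$ case serving as a warm-up for the techniques used in $\Delta\le5$.
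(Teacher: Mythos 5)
Your plan disposes of the first clause and, with some patching, the second (your claim that $\Delta\le2$ forces $|H|\le r$ is not quite right: one only gets $|H|\le r+1$, with $|H|=r+1$ ruled out for even $r$ by a parity count on a side, after which the greedy bound $\tau\le\lceil|H|/2\rceil$ does give $(r+1)/2$; the paper gets this clause more cleanly by minimising the quadratic in $|H|$ on the right of \eref{e:x3x4}). But for the two clauses that carry the content of the theorem there are genuine gaps. First, the contrapositive structure is muddled: to prove $\Delta\ge5$ one assumes $\Delta\le4$, so no extension of \lref{l:stndrdcnt} with an extra variable $x_5$, and no greedy step through a degree-$5$ vertex, is needed or available; the existing lemma applies as it stands. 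Second, your main engine --- a greedy cover that removes ``at least $4$, then $3$, then $2$'' edges per step --- is unjustified: after deleting the lines through one maximum-degree vertex, the residual intersecting hypergraph may immediately have maximum degree $2$, so nothing guarantees the intermediate rates. The step you are missing is the pigeonhole onto a single side: some side contains $\mu=\lceil x_4/r\rceil$ vertices of degree $4$, these cover $4\mu$ \emph{distinct} lines (vertices on one side lie on disjoint sets of lines), and the remaining lines are then covered in pairs, giving $\tau\le\mu+\lceil(|H|-4\mu)/2\rceil=\lceil|H|/2\rceil-\mu$.

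Third, neither of your sketches explains how to extract a lower bound on $x_4$ (or, for the third clause, on $x_1$ alone) from \eref{e:x1x4}, which only controls $x_1+x_4$; without separating off $x_1$ the counting inequalities say nothing about the maximum degree. The paper does this by applying \lref{l:sidecover} to every side and summing, which yields $x_1\le r+2s$ where $s=|V(H)|-r\tau\ge0$, and by rederiving \eref{e:x1x4} with the error term $3s$ so that the $s$'s cancel; this, combined with the same-side greedy bound above and optimising the quadratics in $|H|$ (by parity of $|H|$ in the last case), is precisely where the constants $2r/3+1$ and $(25r+23)/32$ come from. You mention \lref{l:sidecover} only in passing, and the linear program you propose (the three counts plus an unjustified greedy schedule) does not close, so as written the proposal does not establish the third or fourth clause. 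As an aside, the same-side pigeonhole idea also gives the third clause directly (using degree-$3$ vertices and \eref{e:x3x4} one gets $\tau\le 2r/3+2/3+O(1/r)$), so once that mechanism is in place the two hard clauses are proved by one and the same argument.
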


\begin{proof}
Treating the right hand side of \eref{e:x3x4} as a quadratic in $|H|$ we see
that it is minimised when $|H|$ is $r$ or $r+1$. Hence
$x_3+3x_4\ge r(2\tau-r-1)/2$, so $\Delta\ge3$ whenever $\tau>(r+1)/2$.
(This bound is best possible, as demonstrated by
\lref{l:1fact}).

\def\wasepsilon{s}

Suppose that $|V(H)|=r\tau+\wasepsilon$ for some $\wasepsilon\ge0$.
Applying \lref{l:sidecover} to each side of $H$ we discover that
$r\tau\le (x_1+r)/2+(|V|-x_1)$ which means that
$x_1\le r+2(|V|-r\tau)=r+2\wasepsilon$.

Assume that $\Delta<4$.
Incorporating
the $\wasepsilon$ error term into the derivation of \eref{e:x1x4} we find that
\begin{align*}
x_1\ge \binom{|H|}{2}+3r\tau+3\wasepsilon-2r|H|
\ge r(3\tau-2r-2)+3\wasepsilon,
\end{align*}
by again minimising the quadratic in $|H|$.
Therefore
$r\ge x_1-2\wasepsilon\ge r(3\tau-2r-2)$, which implies that $\tau\le 2r/3+1$.

Next assume that $\Delta<5$. As in the previous case, we strengthen
\eref{e:x1x4} to give
\[
x_4\ge \binom{|H|}{2}+3r\tau+3\wasepsilon-2r|H|-x_1
\ge \binom{|H|}{2}+3r\tau-2r|H|-r.
\]
There is some side of $H$ with at least $\mu=\lceil x_4/r\rceil$ vertices of
degree $4$ on it. Using these vertices in a greedy cover we find that
\[
\tau\le \mu + \big\lceil(|H|-4\mu)/2\big\rceil
= \big\lceil |H|/2\big\rceil-\mu
\le \big\lceil |H|/2\big\rceil- \frac{1}{r}\binom{|H|}{2}-3\tau+2|H|+1.
\]
Maximising the quadratic in $|H|$ for each of the two possible parities
of $|H|$, we find that $\tau\le 25r/32+11/16+1/(32r)<(25r+23)/32$.
\end{proof}

Hence, using a greedy algorithm that chooses a vertex of highest degree at each
step, we can find an $(r-2)$-cover for any intersecting hypergraph with at most
$(2\times\frac12+3\times\frac16+4\times\frac{11}{96}+5\times\frac{7}{32})r+O(1)$
edges.

\begin{corollary}\label{cy:f(r)bnd}
$f(r)\ge 293r/96+O(1) \ge 3.052r+O(1)$ as $r\rightarrow\infty$.
\end{corollary}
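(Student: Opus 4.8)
The plan is to run the greedy covering algorithm and bound how quickly it shrinks the uncovered subhypergraph using the degree thresholds from \tref{t:mindeg}. Concretely, start with $H_0=H$ and let $H_1,H_2,\dots$ be the successive uncovered subhypergraphs, where $H_{j+1}$ is obtained from $H_j$ by deleting a vertex of maximum degree in $H_j$ together with all lines through it; write $\tau_j$ for the number of lines removed at step $j$, so $|H|=\sum_j\tau_j$ and the greedy cover has one vertex per nonempty step. The point is that each $H_j$ is still an intersecting $r$-partite hypergraph (sides and uniformity are inherited), so \tref{t:mindeg} applies to \emph{every} $H_j$, not just to $H$.

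The key step is to read \tref{t:mindeg} contrapositively as a lower bound on the maximum degree of $H_j$ in terms of its covering number. Since $\tau(H_j)\ge \tau(H)-j$ (each deleted vertex drops $\tau$ by at most $1$), as long as we have not yet placed many vertices the covering number of $H_j$ is large, hence its maximum degree is large, hence $\tau_j$ is large. Quantitatively: if we are aiming to show $\tau(H)\le r-2$ fails to force few edges, suppose for contradiction $\tau(H)\ge r-1$; then for the first few steps $\tau(H_j)\ge r-1-j$ exceeds $(25r+23)/32$, so $\tau_j\ge5$; once it drops below that but stays above $2r/3+1$ we get $\tau_j\ge4$; then $\tau_j\ge3$ while $\tau(H_j)>(r+1)/2$; and $\tau_j\ge2$ as long as $H_j$ has at least two lines. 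Summing $\tau_j$ over the ranges of $j$ in which each threshold is active — roughly $j\in[0,7r/32)$ giving steps of size $\ge5$, then $j$ in an interval of length about $7r/96$ giving steps of size $\ge4$, then length about $r/6$ giving $\ge3$, then the rest giving $\ge2$ — and noting that the total number of steps is at most $\tau(H)-1\le r-2$ before $H_j$ becomes empty, produces $|H|=\sum\tau_j\ge (5\cdot\frac{7}{32}+4\cdot\frac{7}{96}+3\cdot\frac16+2\cdot\frac12)r+O(1)$. A short arithmetic check shows $5\cdot\frac{7}{32}+4\cdot\frac{7}{96}+3\cdot\frac16+1 = \frac{105}{32}\cdot\ldots$; collecting the coefficient gives exactly $293/96$, and $293/96>3.052$, which is the claimed bound. (The displayed formula preceding the corollary is precisely this sum written with the complementary weights $\frac12,\frac16,\frac{11}{96},\frac7{32}$, since e.g. $\frac12-\frac16-\frac{11}{96}-\frac7{32}=\frac{7}{96}$ etc.)

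The main obstacle is purely bookkeeping: one must pin down exactly how many greedy steps fall into each degree regime. The thresholds in \tref{t:mindeg} are stated in terms of $\tau$, and $\tau(H_j)$ decreases by at most $1$ per step, so the transitions $\tau(H_j)$ crosses $(25r+23)/32$, then $2r/3+1$, then $(r+1)/2$, happen at $j$-values that differ from $\tau(H)-$(threshold) by at most the cumulative slack — but one has to be careful that $\tau(H)$ itself might be as small as $r-1$ (not larger), so the first regime has length $r-1-(25r+23)/32 = 7r/32 - O(1)$, the second $(25r+23)/32-(2r/3+1)=7r/96-O(1)$ via $\frac{25}{32}-\frac23=\frac{75-64}{96}=\frac{11}{96}$, hmm — so actually the length of the ``degree $\ge4$'' window is $\frac{11}{96}r-\frac16 r$? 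No: the degree-$4$ regime runs while $2r/3+1<\tau(H_j)\le(25r+23)/32$, a window of length $(\frac{25}{32}-\frac23)r+O(1)=\frac{11}{96}r+O(1)$, and degree-$3$ while $(r+1)/2<\tau(H_j)\le 2r/3+1$, length $(\frac23-\frac12)r+O(1)=\frac16 r+O(1)$, and degree-$\ge2$ for the final $\approx \frac12 r$ steps. With these window lengths the sum is $5\cdot\frac{7}{32}r+4\cdot\frac{11}{96}r+3\cdot\frac16 r+2\cdot\frac12 r+O(1)$; I would then simplify over a common denominator of $96$ to get $\frac{210+44+48-?}{96}$—this needs the careful check that the total window lengths add to $r-1+O(1)$, i.e.\ $\frac7{32}+\frac{11}{96}+\frac16+\frac12 = 1$, which holds — and the weighted sum collapses to $293r/96+O(1)$. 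I would present the argument as: (i) define the greedy sequence; (ii) apply \tref{t:mindeg} to each $H_j$; (iii) tabulate the four regimes with their lengths and step-sizes; (iv) sum and simplify, with all the off-by-$O(1)$ terms absorbed at the end.
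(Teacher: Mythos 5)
Your proposal follows exactly the paper's route: the corollary is just the greedy bookkeeping recorded in the sentence preceding it, namely applying \tref{t:mindeg} to each residual hypergraph $H_j$ (via $\tau(H_j)\ge\tau(H)-j\ge r-1-j$) and summing step sizes $5,4,3,2$ over windows of lengths $\tfrac{7}{32}r,\ \tfrac{11}{96}r,\ \tfrac16 r,\ \tfrac12 r+O(1)$, which is precisely your final tabulation once you corrected the transient $7/96$ to $11/96$. One small fix: the number of greedy steps is at \emph{least} $\tau(H)\ge r-1$ (each deleted vertex lowers $\tau$ by at most one, and the greedy vertices form a cover), not ``at most $\tau(H)-1\le r-2$'' as you wrote; it is this lower bound that guarantees the degree-$\ge2$ regime lasts about $r/2$ steps and contributes the $2\cdot\tfrac12 r$ term, exactly as your check $\tfrac7{32}+\tfrac{11}{96}+\tfrac16+\tfrac12=1$ presupposes.
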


Of course, \tref{t:mindeg} can also be used to find lower bounds on $f(r)$
for specific values of $r$. For example, $f(8)\ge1+2+2+2+3+3+5=18$,
$f(9)\ge1+2+2+2+2+3+3+5=20$ and $f(10)\ge1+2+2+2+2+3+3+4+5=24$.
In \cite{yuster} small values of $f(r)$ were studied, including a proof
that $12\le f(6)\le 15$.  We now determine the value of $f(6)$.

\begin{theorem}\label{f6is13}
$f(6)=13$
\end{theorem}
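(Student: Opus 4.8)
The plan is to establish $f(6)=13$ by proving separately that $f(6)\le 13$ and $f(6)\ge 13$. The upper bound is the easy direction: I would exhibit an explicit intersecting $6$-partite hypergraph with $13$ edges and $\tau\ge 5$, written in the same symbol-sequence format as the example in \eref{e:tight7}, and simply assert (as for \eref{e:tight7}) that a computer check confirms it has no $4$-cover. Since each side is a cover, $\tau\le 6$, and combined with $\tau\ge 5$ this gives an $H$ with $13$ edges realizing the defining property of $f(6)$, so $f(6)\le 13$. (In fact, if one wants, one can also note that the improvement over the previously known $f(6)\le 15$ may come from deleting edges from such an example or from $\tpp{6}$.)

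The substantive direction is $f(6)\ge 13$, i.e. showing that no intersecting $6$-partite hypergraph with at most $12$ edges has $\tau\ge 5$. Suppose for contradiction that $H$ is such a hypergraph, so $r=6$, $|H|\le 12$, and $\tau\ge 5$. The first step is to pin down the maximum degree $\Delta$ using \tref{t:mindeg}: here $\tau\ge5>(r+1)/2=3.5$ forces $\Delta\ge 3$, and $\tau\ge 5>2r/3+1=5$ is not quite reached, so a priori $\Delta\in\{3,4,\dots\}$. I expect the real work to be a case analysis on $\Delta$. When $\Delta\le 4$, \lref{l:stndrdcnt} applies: plugging $r=6$, $\tau\ge5$, and $|H|\le 12$ into \eref{e:x1x4} and \eref{e:x3x4} gives linear inequalities among the $x_i$, and the equality condition (linear hypergraph with exactly $\tau$ vertices per side) severely constrains the structure; combined with \lref{l:sidecover} applied to each side (bounding $x_1$ from above by something like $r+2(|V|-r\tau)$, which is small when $|V|$ is close to $r\tau$) one should be able to force a contradiction, or force $H$ to be a very specific linear hypergraph that then fails to have $\tau\ge 5$. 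When $\Delta\ge 5$, a high-degree vertex $v$ lies on at least $5$ of the $\le 12$ edges; after covering those by $v$, the remaining $\le 7$ edges form an intersecting subhypergraph that a greedy cover handles in $\lceil 7/2\rceil=4$ steps, giving $\tau\le 5$ — so $\tau=5$ exactly, and then a sharper count (the remaining $\le 7$ edges would need $\tau\ge 4$ after deleting $v$, which by \lref{l:1fact}-type reasoning with the appropriate parameters is itself very restrictive) should close this branch.

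The main obstacle will be the $\Delta=3$ and $\Delta=4$ cases with $|H|\in\{11,12\}$, where the greedy bounds are not immediately contradictory and one must actually analyze the possible degree sequences. The strategy there is: use \eref{e:x1x4}, \eref{e:x3x4}, \eref{points}, \eref{v,e}, and \eref{intersect} to narrow $(x_1,x_2,x_3,x_4)$ to a short list, then for each candidate degree sequence use \lref{l:sidecover} side-by-side with a direct greedy argument (choosing first the vertices of degree $\Delta$, then degree $\Delta-1$, etc., as in the remark following \cref{cy:f(r)bnd}) to show a cover of size $\le 4$ exists. It is likely that a handful of residual configurations survive this and must be eliminated by hand or by a short computer search over the (now very constrained) possible hypergraphs; I would structure the write-up with explicit \case{} blocks for each value of $\Delta$ and, within $\Delta\le4$, for the surviving degree sequences. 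The parity bookkeeping in the greedy bounds (ceilings and floors) and the equality-case analysis of \lref{l:stndrdcnt} are where care is needed, but none of it should require a genuinely new idea beyond the lemmas already proved.
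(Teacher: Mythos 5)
Your upper bound is fine and matches the paper (an explicit $13$-edge example with a computer check that $\tau=5$). But the lower bound as written is a plan, not a proof, and the plan has two concrete gaps. First, in your $\Delta\ge5$ branch you correctly get only $\tau\le 5$ from the greedy bound and then say the branch "should close" by \lref{l:1fact}-type reasoning on the remaining $\le 7$ edges; but \lref{l:1fact} requires $r$ odd and does not apply to a $6$-partite hypergraph. The branch does close, but by a different tool: the hypergraph $H'$ of uncovered edges has $\tau(H')\ge 4>(6+1)/2$, so \tref{t:mindeg} gives a vertex $u$ of degree at least $3$ in $H'$; then $v$ and $u$ cover at least $8$ lines and the remaining at most $4$ lines are covered in pairs by two more vertices, a $4$-cover. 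You never state this argument, and the lemma you point to cannot supply it.

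Second, and more seriously, the heart of the theorem is the $\Delta\le 4$ case with $12$ edges, and there your proposal stops at "narrow the degree sequences and eliminate residual configurations by hand or by a short computer search." The actual argument needs a specific idea you do not have: from the side constraints (no two degree-$4$ vertices on a side, no four degree-$3$ vertices on a side, and a degree-$4$ vertex excludes more than one degree-$3$ vertex from its side) one gets $x_3+3x_4\le 24$, while \lref{l:stndrdcnt} with $r=6$, $\tau\ge5$, $|H|=12$ gives $x_3+3x_4\ge 24$; equality forces $H$ to be linear with exactly $5$ vertices per side and $x_4=6$. Then, since no two vertices may jointly cover $8$ lines, every pair of the six degree-$4$ vertices must lie on a common line; $\binom{6}{2}=15>12$ lines forces three degree-$4$ vertices on one line, which (by linearity) cover $10$ lines, and one further vertex covers the last two, yielding a $4$-cover and the contradiction. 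Without this equality-plus-pigeonhole step (or a genuine substitute, e.g.\ an exhaustive search you would have to actually perform and justify), the lower bound $f(6)\ge 13$ is not established. Note also that the paper only needs to treat $|H|=12$ because $f(6)\ge 12$ is already known from the Mansour--Song--Yuster paper; quoting that saves you the $|H|\le 11$ cases you set up.
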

\begin{remark}
This was proved independently in \cite{AP}.
\end{remark}

\begin{proof}
The following is a $6$-partite intersecting hypergraph with
$30$ vertices, $13$ edges and $\tau=5$.
\begin{equation}\label{e:tight6}
\begin{array}{lllllllll}
111111&& 444114&& 125334 &&  241535 &&  545421\\
222211&& 553315&& 213444 &&  351224\\
333131&& 143252&& 255153 &&  514233
\end{array}
\end{equation}

Since $f(6)\ge12$, it suffices to now show
that any $6$-partite intersecting hypergraph $H$ with $12$ edges can
be covered with $4$ vertices. Assume to the contrary that $\tau(H)\ge5$.

First suppose that $H$ has a vertex $v$ of degree $5$ or more.  Let
$H'$ be the hypergraph made from $H$ by removing the lines through
$v$. By assumption $\tau(H')\ge4$, so \tref{t:mindeg} shows that
$H'$ has a vertex $u$ of degree at least $3$. Together $u$ and $v$
cover at least $8$ of the lines of $H$. The remaining lines can be
covered in pairs, using at most two further vertices, contradicting
$\tau(H)\ge5$.

{}From now on, let $x_i$ be the number of vertices in $H$ of degree
$i$. If there were two vertices of degree $4$ in the same side of
$H$ then we can find a $4$-cover, as before.  Therefore, $x_4\le 6$.
If there were four vertices of degree $3$ in the same side, then
they would form a cover, again a contradiction.  Also, if there is a
vertex of degree $4$ in a side, there can be at most one vertex of
degree $3$ in the same side.  Therefore, $x_3\le18-2x_4$. It follows
that $x_3+3x_4\le 24$. By \lref{l:stndrdcnt}, we find that
$x_3+3x_4=24$ and $H$ is linear. Moreover, $x_4=6$. Now, no two
vertices between them cover $8$ lines, since otherwise the remaining
lines could be covered greedily in a $4$-cover. It follows that each
pair of vertices of degree $4$ lie on a common line. There are
$\binom{6}{2}=15$ such pairs and only $12$ lines, so there are three
vertices of degree $4$ lying on a common line. These three vertices
will cover $10$ lines between them since $H$ is linear. The
remaining two lines can be covered by a single vertex, so we are
done.
\end{proof}

In \cite{yuster} it was shown that $f(7)\ge 14$. We next establish
the exact value of $f(7)$.

\begin{theorem}
$f(7)=17$.
\end{theorem}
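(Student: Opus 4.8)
The result has two halves: the upper bound $f(7)\le 17$ and the matching lower bound. For the upper bound it is enough to exhibit a $7$-partite intersecting hypergraph with $17$ edges and covering number $6$, and the array in \eref{e:tight7} is such a hypergraph --- each of its seven sides has exactly $6$ vertices and meets every edge, so $\tau\le 6$, while a computer check shows it admits no $5$-cover, so $\tau=6=r-1$ and $f(7)\le 17$.

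For the lower bound I would show that every $7$-partite intersecting hypergraph $H$ with $|H|\le 16$ has $\tau(H)\le 5$; since $f(7)\ge 14$ is already known only $|H|\in\{14,15,16\}$ is new, but the argument runs uniformly. Suppose $\tau(H)\ge 6$. As $\tau(H)=6>2\cdot 7/3+1$, \tref{t:mindeg} gives $\Delta(H)\ge 4$, and since $6<(25\cdot 7+23)/32$ this cannot be upgraded for free, so a case analysis on $\Delta:=\Delta(H)$ is unavoidable. The engine is the familiar reduction: deleting the edges through a vertex $v$ of maximum degree $d$ produces an intersecting $H'$ with $|H'|=|H|-d$ and $\tau(H')\ge\tau(H)-1\ge 5$, and it suffices to cover $H'$ by $4$ vertices. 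If $d\ge 8$ then $|H'|\le 8$, hence $\tau(H')\le\lceil |H'|/2\rceil\le 4$; so $\Delta\le 7$. For $d\in\{5,6,7\}$ one iterates this inside $H'$: at each step the edge count and the target covering number both drop, and a branch closes either by the crude greedy bound $\tau\le\lceil|\cdot|/2\rceil$, or by \lref{l:1fact} once a residual hypergraph has at most $7$ edges but covering number at least $4$, or by \lref{l:stndrdcnt} and \tref{t:mindeg} together with a pigeonhole over the sides (as at the end of \tref{f6is13}) when a residual hypergraph has between $8$ and $11$ edges. A branch closing by none of these forces its residual hypergraph into a rigid shape --- linear, with exactly the covering number of vertices on each side and with the degree sequences of sides and of lines prescribed --- and the finitely many such configurations can be dismissed individually, or by computer as with \eref{e:tight7}.

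The main work, and the main obstacle, is the case $\Delta=4$, a harder analogue of the last paragraph of the proof of \tref{f6is13}. Here \lref{l:stndrdcnt} applies with $r=7$, $\tau=6$, $m=|H|\le 16$, so substituting into \eref{e:x1x4} and \eref{e:x3x4} bounds the degree-sequence counts $x_1,x_2,x_3,x_4$, while \lref{l:sidecover} applied to each of the seven sides bounds $x_1$ from above. Local constraints of the same flavour as in \tref{f6is13} --- a side cannot carry three vertices of degree $4$, a side carrying two vertices of degree $4$ carries at most one vertex of degree $3$, and so on --- combined with the equality clause of \lref{l:stndrdcnt} (any tight inequality forces $H$ linear with exactly $6$ vertices per side) cut the list of feasible degree sequences to a handful, and each of these must then be turned into an explicit $5$-cover or shown to be uninstantiable. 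This is genuinely more delicate than the case $r=6$: there, at $m=12$, the inequalities of \lref{l:stndrdcnt} have slack and the shortcut ``four degree-$3$ vertices on a side form a cover'' suffices, whereas at $r=7$, $m=16$ the inequalities are essentially tight and the naive extension fails --- three top-degree vertices on a common edge of a linear $H$ cover only $10$ of the $16$ edges, leaving $6$ that cannot in general be covered by two more vertices. Closing this gap requires extracting extra information from the intersection pattern and from the assumed absence of a $5$-cover, and I expect that the last handful of configurations would in practice be settled by a finite computer search over the small, tightly constrained family of remaining candidates.
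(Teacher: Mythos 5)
Your upper-bound half is fine and identical to the paper's (the example \eref{e:tight7} plus a computer check that it has no $5$-cover). The lower-bound half, however, is a plan rather than a proof: every branch that actually matters is closed by an appeal to ``dismissed individually, or by computer'' or ``I expect \dots\ a finite computer search'', and neither the finite list of residual configurations nor any search is exhibited. The paper's proof of $f(7)\ge 17$ is entirely computer-free, and its hard work sits exactly where your sketch is vaguest. After removing a maximum-degree vertex $v$ of $H$ and then a maximum-degree vertex $u$ of $H'$, one has $\deg v=|H|-|H'|$ and $\deg_{H'}u=|H'|-|H''|$; \lref{l:1fact} forces $|H''|\ge7$, hence $\deg_{H'}u\le4$, and \tref{t:mindeg} forces $\deg_{H'}u\ge3$, so $|H'|\ge10$. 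The genuinely delicate cases are then $|H|=16$ with $\deg v=6$ (so $|H'|=10$) and with $\deg v=5$ (so $|H'|=11$). These are settled by bespoke counting arguments your proposal does not supply: for $|H'|=10$ one shows $x_1\ge11$ in $H'$, locates a line carrying two degree-$1$ vertices, and extracts two vertices covering six lines, whence a greedy $5$-cover; for $|H'|=11$ one uses $|V(H)|\ge 42$ (each side is a cover) together with bookkeeping of how many previously unused or degree-$1$ vertices each line through $u$ or $v$ can contribute ($a_1+2a_0\le3$, $b_1+2b_0\le5$), constrained by \lref{l:sidecover} and by the rigid structure of $H''$ given by \lref{l:1fact}. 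Nothing in your sketch plays the role of these steps.

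Moreover, the case you single out as the main obstacle, $\Delta=4$ with $|H|=16$, is in fact the quickest of the $|H|=16$ cases: every edge must meet the other $15$ edges, so each edge carries a vertex of degree at least $4$; and since no side can have a degree sequence containing $[4,4,4]$, $[4,4,3,3]$ or $[4,3,3,3,3]$ (each would yield a $5$-cover), one gets $x_4\le14$ and $x_3\le35-2x_4$, hence $x_3+3x_4\le49$, contradicting $x_3+3x_4\ge\binom{16}{2}+7\cdot6-7\cdot16=50$ from \lref{l:stndrdcnt} --- no enumeration or computer needed, and in particular your worry about three degree-$4$ vertices on a common line covering only $10$ edges never has to be confronted. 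So the proposal misidentifies where the difficulty lies (it is in the $\deg v\in\{5,6\}$ cases, not in $\Delta=4$), and at those points it defers to an unperformed case analysis or computer search. As written, the bound $f(7)\ge17$ is not established.
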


\begin{proof}
  Suppose that $H$ is a $7$-partite intersecting hypergraph with
  $\tau(H)\ge 6$.  In \eref{e:tight7} we gave an example showing that
  $f(7)\le 17$, so it suffices to show that $|H|\ge17$. By
  \cite{yuster} we know that $|H|\ge 14$. Aiming for a contradiction,
  we assume that $|H|\le16$.

Let $v$ be a vertex of maximum degree in $H$. Removing the edges through
$v$ and all resulting isolated vertices we obtain a hypergraph
$H'$. Let $u$ be a vertex of maximum degree in $H'$.  From $H'$,
remove the edges through $u$ and all resulting isolated vertices to
obtain a hypergraph $H''$. By construction $\tau(H')\ge5$,
and $\tau(H'')\ge4$. Also $v$ has degree $|H|-|H'|$ in $H$ and
$u$ has degree $|H'|-|H''|\le|H|-|H'|$ in $H'$.
By \lref{l:1fact}, $|H''|\ge7$ so $u$ has degree
at most $4$ in $H'$.
By \tref{t:mindeg}, $u$ has degree at least $3$ in $H'$, so $|H'|\ge10$.

\case{$|H''|=7$ and $|H|\le15$}

The structure of $H''$ is prescribed by \lref{l:1fact}. In particular,
$H''$ has $4$ vertices per side. Let $D$ be the sum of
the degrees of the vertices in $V(H)\setminus(V(H'')\cup\{u,v\})$. By
\lref{l:sidecover}, each side that does not contain $u$ or $v$ contributes
at least $3$ to $D$. There are $6$ such sides if $u$ and $v$ are on the same
side. Otherwise $u$ and $v$ are on different sides and those
sides each contribute at least $1$ to $D$. Therefore
$D\ge\min\{2\times1+5\times3,6\times 3\}=17$. Since $H$ is
intersecting, each line from $H\setminus H''$ includes a cover of
$H''$ as well as either $u$ or $v$, and thus contributes at most $2$
to $D$. Hence $|H|-|H''|\ge9$, which is impossible.

\case{$|H''|\ge8$ and $|H|\le15$}

By \tref{t:mindeg} we may assume that $u$ has degree $3$ in $H'$, $v$
has degree $4$ in $H$ and $|H|=15$.  Applying \lref{l:stndrdcnt} we
find that $x_3+3x_4\ge42$ in $H$.  If there were two vertices of degree
$4$ on the same side we could use them as $v$ and $u$ and we would
end up in Case 1 above. So we may assume that
$x_4\le7$.  Thus either $x_3=21$ and $x_4=7$, or else
$x_3+x_4>28$. Either way, some side has $4$ vertices covering at least
$13$ lines or 5 vertices covering at least $15$ lines. Both options
lead to a $5$-cover.

\case{$|H|=16$ and $|H'|\le10$}

By the preliminary comments we know $|H'|=10$, $v$ has degree $6$
and $u$ has degree $3$.

Suppose $H'$ has $x_i$ vertices of degree $i$ for $1\le i\le3$.  By
\lref{l:stndrdcnt}, $x_1\ge10$ with equality only if every side has
precisely 5 vertices.  If we have equality then
there exists a side that contains at least two
vertices of degree 1, contradicting \lref{l:sidecover}. So
we may assume $x_1\ge 11$.
By the pigeonhole principle there is a line $e_1$ with at least two
vertices of degree $1$ on it.  Choose a line $e_2\ne e_1$, and suppose
it meets $e_1$ at a vertex $v_2$.  There are at least $10-3=7$ lines
that do not pass through $v_2$ and hence meet $e_2$ at some other
vertex. There are $6$ vertices in $e_2\setminus\{v_2\}$, so one of
them, say $v_3$, has degree $3$ and does not lie on $e_1$.  Now
consider the at least $5$ lines which do not pass through $v_2$ or
$v_3$. They have to meet $e_1$ in a vertex of degree greater than $1$
other than $v_2$. There are at most $4$ such vertices, so one of them,
say $v_4$, covers two of the lines as well as $e_1$. In other words,
$v_3$ and $v_4$ together cover $6$ lines, and the remaining lines can
be covered greedily with only two more vertices, a contradiction.

\case{$|H|=16$ and $|H'|\ge12$}

Consider any edge $e\in H$. The other 15 lines of $H$ must meet $e$,
so there is a vertex on $e$ of degree at least $4$.
Hence the degree of $v$, namely $|H|-|H'|$, must be $4$.
No side of $H$ can have a degree sequence containing $[4,4,4]$,
$[4,4,3,3]$ or $[4,3,3,3,3]$.
So $x_4\le 14$ and $x_3\le 35-2x_4$. Hence $x_3+3x_4\le 49$,
contradicting \lref{l:stndrdcnt}.

\case{$|H|=16$ and $|H'|=11$}

Here $v$ has degree $5$.
Each line through $v$ in $H$ includes a cover of $H'$ and hence
contains at most one vertex that is not in $V(H')\cup\{v\}$.  Hence
$|V(H)|\le|V(H')|+6$. By \eref{points} we have $|V(H)|\ge42$ so
$|V(H')|\ge36$.

Starting with the vertices of $H$ and the lines of $H'$, consider
adding the lines through $v$ one at a time in an arbitrary order. For
a given line through $v$, suppose that it includes $a_i$ vertices
(other than $v$ itself) that are of degree $i$ just before the line is
added. Given that $\tau(H')\ge5$ and that the lines through any two
vertices of degree $1$ can be covered by a single vertex, we see that
$a_1\le3$ and that if $a_0=1$ then $a_1\le1$. In other words,
$a_1+2a_0\le 3$. Similar reasoning can be applied to lines through $u$
that include $b_i$ vertices (other than $u$) of degree $i$ before they
are added to $H''$, showing that $b_1+2b_0\le5$.  These facts will be used
repeatedly in the subcases below.

\subcase{$|V(H')|=36$ and $H'$ has at least $8$ vertices of degree $1$}

It follows that $|V(H)|=42$, and this can only be achieved by each
line through $v$ having $a_0=1$ (and thus $a_1\le1$). It follows that $H$
has at least as many vertices of degree $1$ as $H'$. However, this
means that some side of $H$ contains two vertices
of degree $1$, contradicting \lref{l:sidecover}.

\subcase{$u$ has degree at most $3$}

Applying \lref{l:stndrdcnt} to $H'$, we see there is inequality in
\eref{points} and hence $x_3>13$. However, there cannot be 3 vertices
of degree 3 on one side (if there were, they could be used in a greedy
$4$-cover) so $x_3=14$, which in turn means that $|V(H')|=36$,
and $x_1=9$. Hence this case reduces to Case 5a.

\subcase{$u$ has degree $4$ and $|V(H')|=36$}

%Thus we may assume that $u$ has degree $4$.

When $u$ has degree $4$, we have $|H''|=7$ so the
structure of $H''$ is prescribed by \lref{l:1fact}. In particular,
$|V(H'')|=28$ and $H''$ has one vertex of degree 1 on each side.

The only way that $H'$ can have 36 vertices is if 3 of the lines through
$u$ have $b_0=2$, $b_1\le1$ and the remaining line through $u$ has $b_0=1$,
$b_1\le3$. But this means that $H'$ has at least $7+7-6=8$ vertices of degree
$1$, so we are in Case 5a.

\subcase{$u$ has degree $4$ and $|V(H')|\ge37$}

Again $H''$ is prescribed by \lref{l:1fact}. However, this time each
line through $u$ has $b_0=2$, $b_1\le1$. Consequently $|V(H')|=37$ and
$H'$ has at least $7+8-4=11$ vertices of degree $1$.
Also at least $4$ of the lines through
$v$ have $a_0=1$, $a_1\le1$ and the remaining line through $v$ has
$a_1\le3$.
But this means that $H$ has at least $11+4-7=8$ vertices of degree $1$.
This is a contradiction unless $|V(H)|>42$, but that can only happen
if $|V(H)|=43$ and all lines through $v$ have $a_0=1$. In this case,
there are at least $11+5-5=11$ vertices of degree $1$, so at least
one side contradicts \lref{l:sidecover}.
\end{proof}

We say that a hypergraph achieves $f(r)$ if it is $r$-partite, has
$\tau\ge r-1$ and contains only $f(r)$ edges. It is notable that the
examples of hypergraphs achieving $f(r)$ that we gave in
\eref{e:tight7} and \eref{e:tight6} are not linear. We now contrast
this with the situation for smaller $r$.

\begin{theorem}\label{t:5lin}
For $r\le5$ the only hypergraphs achieving $f(r)$ are linear.
\end{theorem}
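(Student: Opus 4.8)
The plan is to handle the cases $r\in\{2,3,4,5\}$ and in each case argue that any $r$-partite intersecting hypergraph $H$ with $\tau(H)\ge r-1$ and exactly $f(r)$ edges must be linear. The values of $f(r)$ for these small $r$ are known (from \cite{yuster} and the preceding discussion, $f(2)=1$, $f(3)=3$, $f(4)=6$, $f(5)=9$), so in each case we have a concrete edge-count to work with, and the goal is to show equality must hold in the counting inequalities of \lref{l:stndrdcnt}, which by that lemma forces linearity.

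The main tool is \lref{l:stndrdcnt}, which requires bounding the maximum degree $\Delta$ of $H$ by $4$; the first step in each case is therefore to rule out large-degree vertices. If $H$ has a vertex $v$ of degree $d$, deleting the lines through $v$ leaves a hypergraph $H'$ with $|H|-d$ edges and $\tau(H')\ge r-2$; combining a greedy cover argument on $H'$ (using \tref{t:mindeg} to force a high-degree vertex in $H'$ when $\tau(H')$ is large) with $v$ itself yields a cover of $H$ that is too small unless $d$ is small. This is exactly the ``vertex of large degree'' manoeuvre used in the proofs of \tref{f6is13} and $f(7)=17$, so for $r\le 5$ it should cut the maximum degree down to $4$ (or lower). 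Once $\Delta\le4$, plug $|H|=f(r)$ and $\tau=r-1$ into \eref{e:x3x4} (and, where helpful, \eref{e:x1x4}); the point is that for these specific small values the right-hand side is large enough that one needs equality, or close to it, which \lref{l:stndrdcnt} then converts into linearity. Where the inequality is not immediately tight, I expect to need an auxiliary bound on $x_3+3x_4$ from above — of the form ``no side contains too many high-degree vertices, else it contains a small cover'' (as in the $f(6)$ proof, where $x_4\le r$ and $x_3\le 3r-2x_4$ came from the fact that $r-1$ vertices of degree $3$ on one side already cover everything). Matching the resulting upper bound against the lower bound from \lref{l:stndrdcnt} pins down the degree sequence and forces equality, hence linearity.

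Concretely: for $r=2$ linearity is vacuous (one edge); for $r=3$, $f(3)=3$ and three pairwise-intersecting lines with $\tau=2$ are easily seen to be linear (three lines meeting pairwise cannot share a common vertex, and a repeated intersection vertex would give a $1$-cover or force $\tau=1$). For $r=4$ with $f(4)=6$ and $r=5$ with $f(5)=9$, after the degree reduction I expect \lref{l:stndrdcnt} combined with the per-side bounds on $x_3,x_4$ to give a chain of inequalities that can only be satisfied with equality throughout — and then the ``if and only if'' clause of \lref{l:stndrdcnt} delivers the conclusion. It may also be cleaner in the $r=5$ case to invoke \lref{l:1fact}: since $r=5$ is odd and $(r+1)/2=3<r-1=4$, that lemma does not directly apply, but the odd-$r$ structure theory and \tref{t:mindeg} (forcing $\Delta\ge3$ since $\tau=4>(r+1)/2=3$) together sharply constrain the degree sequence.

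The hard part will be the $r=5$ case. With nine edges and $\binom{9}{2}=36$ intersecting pairs to account for, the counting is tighter and there is more room for a hypothetical non-linear example to slip through a slack inequality; I anticipate needing a careful case analysis on the exact degree sequence (how many vertices of degree $3$ and $4$, and how they are distributed among sides), using \lref{l:sidecover} repeatedly to forbid configurations that would admit a $3$-cover. The $r=4$ case should be analogous to but simpler than the $f(6)$ argument already carried out. Throughout, the recurring principle is: every deviation from linearity either wastes an intersecting pair (weakening \eref{e:intersect} and hence \eref{e:x3x4}) or creates a concentration of high-degree vertices on a side (producing a too-small greedy cover via \lref{l:sidecover}), and with only $f(r)$ edges there is no budget for either.
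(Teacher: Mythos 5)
Your opening moves (bounding the maximum degree by a greedy-cover argument, applying \lref{l:stndrdcnt} with strict inequality under the non-linearity assumption, and capping $x_3,x_4$ per side via small covers) match the paper's, but the heart of your plan --- that matching the lower bound from \lref{l:stndrdcnt} against these per-side upper bounds forces equality and hence linearity --- does not actually close either case, and the decisive ideas are missing. For $r=4$ ($|H|=6$, $\tau=3$): non-linearity gives $x_3\ge4$, and ``at most one degree-$3$ vertex per side'' gives $x_3\le4$, so $x_3=4$; but this is perfectly consistent, not a contradiction, since the counting lower bound was only $3$. One is left with the scenario in which every side has degree sequence $[3,2,1]$ and exactly one pair of lines meets twice, and no amount of counting eliminates it: the paper kills it by an explicit structural analysis (choosing a degree-$3$ vertex $v_3$ whose lines are otherwise disjoint, showing each of its lines carries exactly one degree-$1$ vertex, reconstructing the picture up to isomorphism, and showing the last line cannot meet all others).

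For $r=5$ ($|H|=9$, $\tau=4$) the gap is larger. The counting plus the forbidden side-patterns $[4,3]$, $[4,4]$, $[3,3,3]$ only yields $x_4\ge2$; several non-linear degree sequences survive. The paper's actual argument then takes a degree-$4$ vertex $v$, deletes its star to get a $5$-edge hypergraph $H'$ with $\tau(H')\ge3$, and invokes \lref{l:1fact} to pin down the structure of $H'$ completely --- note the lemma is applied to the \emph{residual} hypergraph, not to $H$ itself, which is exactly the application you dismissed when you wrote that \lref{l:1fact} ``does not directly apply'' for $r=5$. Repeating this with a second degree-$4$ vertex $u$ (guaranteed by $x_4\ge2$) gives a second rigid hypergraph $H''$, and a careful analysis of the four lines through $u$ shows every vertex outside $V(H'')\cup\{u\}$ has degree $1$, yet those lines carry at most three degree-$1$ vertices; this yields the vertex-count contradiction $20\le|V(H)|\le|V(H'')|+4=19$. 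Your proposal anticipates ``careful case analysis'' but supplies neither the star-deletion/\lref{l:1fact} mechanism nor the vertex-count endgame, so as written it is a plausible plan with a genuine gap rather than a proof.
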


\begin{proof}
The statement is elementary to check for $r\le3$ so we will assume that
$r\in\{4,5\}$.

Suppose $H$ is a hypergraph achieving $f(r)$.
Suppose $H$ has $x_i$ vertices of degree $i$ for each $i$.
We assume that $H$ is not linear, so that we get inequality when we
apply \lref{l:stndrdcnt} to $H$.

\setcounter{case}{0}

\case{$r=4$}

In this case $|H|=f(4)=6$. There are no vertices of degree $4$ or
more, otherwise we would have a greedy $2$-cover.  By
\lref{l:stndrdcnt}, $x_3>3$. However, there cannot be two vertices of
degree $3$ on one side, since they would form a $2$-cover. So $x_3=4$
and each side has a vertex of degree $3$. Thus each side must have
degree sequence $[3,2,1]$ or $[3,1,1,1]$. If any side had the latter
option, there would be equality in \eref{intersect}. As we are
assuming $H$ is not linear, it follows that every side has degree
sequence $[3,2,1]$. Even so, there can only be a single pair of lines
that meets twice and all other pairs must meet once. Hence we can find
a vertex $v_3$ of degree $3$ such that the lines through $v_3$ are
disjoint apart from their intersection at $v_3$. Let $v_2$ and $v_1$
respectively be the vertices of degree $2$ and $1$ on the same side as $v_3$.
%Suppose that $v$ lies on side $V_1$.
The union of the lines through $v_3$ contains every
vertex from $V(H)\setminus\{v_1,v_2\}$.

\begin{figure}[h]
\[
\includegraphics[scale=0.6]{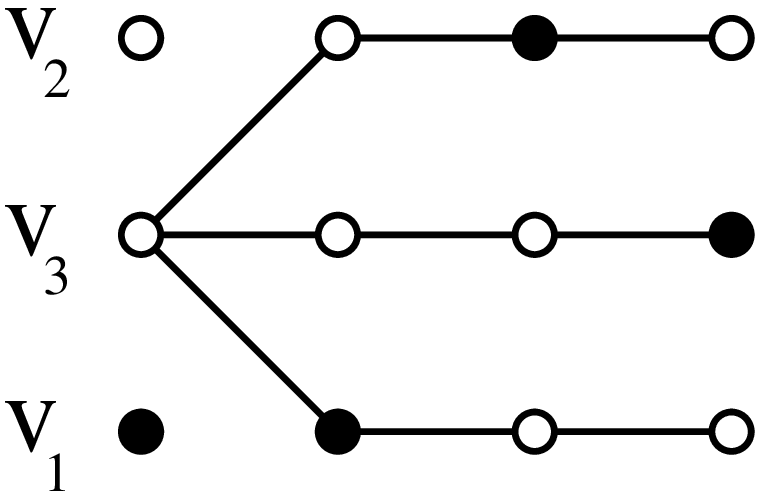}\qquad\qquad\qquad
\includegraphics[scale=0.6]{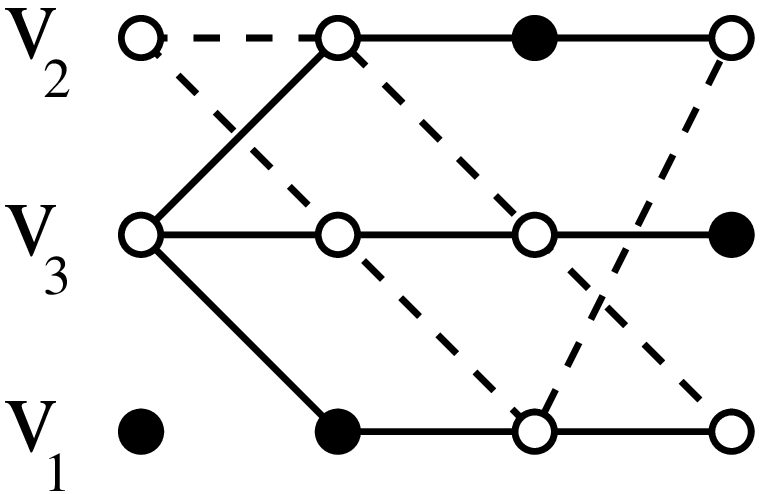}
\]
\caption{\label{f:vlines}}
\end{figure}

No line of $H$ contains two vertices of degree 1, since the total of
the degrees on a line must be at least $|H|+r-1=9$. Yet each side of
$H$ has a vertex of degree $1$, so there must be exactly one vertex of
degree $1$ on each line through $v_3$. Hence, up to isomorphism, the
lines through $v_3$ are as pictured on the left in \fref{f:vlines},
where vertices of degree 1 are shown as solid circles and vertices of
higher degree are hollow. By inspection, there is only one way to add
the lines through $v_2$, yielding the diagram on the right in
\fref{f:vlines}.  However, now the line through $v_1$ cannot meet all
the other lines, giving a contradiction.

\case{$r=5$}

In this case $|H|=f(5)=9$. There are no vertices of degree $5$ or more,
otherwise we would have a greedy $3$-cover.
By \lref{l:stndrdcnt}, $x_3+3x_4>11$. However, no side can have a degree
sequence containing $[4,3]$ or $[3,3,3]$, which means $x_4\ge2$.

Let $v$ be a vertex of degree $4$ in $H$. Removing the edges through
$v$ and all resulting isolated vertices we obtain a hypergraph $H'$
satisfying $\tau(H')\ge3$. The structure of $H'$ is dictated by
\lref{l:1fact}. Let $L$ denote the lines through $v$ in $H$. Since
$H$ is intersecting, each line of $L$ includes a cover of $H'$,
which means it has at most one vertex outside of $V(H')\cup\{v\}$.
At the same time each side of $H$ must have at least $4$ vertices,
since $\tau(H)\ge 4$. Therefore each line of $L$ contains a
different vertex outside of $V(H')\cup\{v\}$, which necessarily has
degree 1 in $H$. Now the only way to satisfy \lref{l:sidecover} is
if each line in $L$ contains a $3$-cover of $H'$ that includes a
vertex of degree $1$ in $H'$. Again, different lines in $L$ must
contain different such degree 1 vertices. Therefore, if two lines in
$L$ meet at a vertex other than $v$, that vertex has degree $2$ in
$H'$.

Let $u$ be a vertex of degree $4$ in $H$, other than $v$. By the
above, $u$ has degree $2$ in $H'$. Removing the edges through $u$ from
$H$ and all resulting isolated vertices, we obtain a hypergraph $H''$
which must also have the structure in \lref{l:1fact}.
%In particular, $H''$ has only 3 vertices on each side whereas $H$ has
%$4$ vertices per side. Thus $|V(H)|-|V(H'')|=5$. We claim that this is impossible.
Suppose the lines through $u$ in $H$ are
$\ell_1,\ell_2,\ell_3,\ell_4$, where $\ell_1,\ell_2\in H'$ and
$\ell_3,\ell_4\in L$.
Note that $\ell_3$ cannot meet $\ell_1$ or $\ell_2$ anywhere other
than at $u$, since $\ell_3$ only has 5 intersections with lines of
$H'$, counting multiplicities, and has to meet all 5 lines of
$H'$. A similar statement holds for $\ell_4$. Also $\ell_1$ and $\ell_2$
meet only at $u$ since $H'$ is linear. Finally, $\ell_3$ and $\ell_4$
do not meet at any vertex with degree $2$ in $H$. Putting these observations
together, we find that all vertices in
$V(H)\setminus\big(V(H'')\cup\{u\}\big)$ have degree $1$ in $H$. However,
$\ell_1,\ell_2,\ell_3,\ell_4$ between them contain at most 3 vertices of
degree $1$ in $H$. This gives the contradiction
$20=|V(H)|\le|V(H'')|+4=19$.
\end{proof}

Many questions remain open about $f(r)$. Mansour\etal\cite{yuster}
conjectured that it grows linearly. Since we have a linear lower bound
this is equivalent to:

\begin{conjecture}\label{cj:MSY}
$f(r)=O(r)$.
\end{conjecture}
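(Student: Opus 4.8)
The statement is an upper bound (equivalently, an existence-and-size assertion), so the density and structure lemmas of this section --- all of which drive \emph{lower} bounds on $f(r)$ --- give no leverage; what is needed is a construction. The natural starting point is \tref{t:random_pp}, which already yields $f(r)=O(r\log r)$ whenever $r-1$ is a prime power, since a random set of $22r\log r$ lines of $\tpp r$ is intersecting and has $\tau\ge r-1$ with high probability. I would attack the conjecture in two stages: (1) for prime-power $r-1$, remove the $\log r$ factor; (2) pass from prime powers to arbitrary $r$.

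For stage~(1), the $\log r$ is an artifact of the union bound. The obstructions to $\tau\ge r-1$ are the $(r-2)$-subsets $S\subseteq V(\tpp r)$, and a short count using the axiom that two lines meet in a point shows that a uniformly random line of $\tpp r$ avoids a fixed $S$ with probability bounded away from $1$ --- roughly $\tfrac12$ for a typical $S$ --- so summing over the $2^{\Theta(r\log r)}$ choices of $S$ forces $\Theta(r\log r)$ lines. The plan is to replace the union bound by a spread- or container-type argument applied to the hypergraph $\mathcal H$ whose vertices are the lines of $\tpp r$ and whose edges are the minimal line-sets $T$ with $\tau(T)\ge r-1$: if one can exhibit a probability distribution supported on \emph{small} such $T$ that is $O(1/r)$-spread, a Frankston--Kahn--Narayanan--Park type threshold bound should show that $O(r)$ random lines already contain some $T$. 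Keeping the relevant edges of $\mathcal H$ of size $O(r)$ rather than of size comparable to $|\tpp r|$ is essential here, since otherwise a stray $\log$ reappears. A more concrete alternative is to exhibit the $O(r)$ lines explicitly inside $PG(2,q)$ through an algebraic family --- for instance the $q+1$ lines dual to a conic, or a Reed--Solomon-type line packing --- which would also make the bound constructive.

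For stage~(2) one must confront $r$ with $r-1$ not a prime power, where, as noted in the introduction, even the finiteness of $f(r)$ is open. One route: using known bounds on gaps between primes, choose a prime $q\ge r-1$ with $q-(r-1)=O(r^{0.6})$, take the stage-(1) hypergraph $H_q\subseteq\tpp{q+1}$ with $O(q)=O(r)$ lines and $\tau(H_q)\ge q$, and try to fold $q+1-r$ of its sides into the others, spending the $q+1-r$ units of slack in $\tau$ to absorb the loss. Keeping the result $r$-partite \emph{and} intersecting is the crux of this step: merely deleting a side destroys intersection (two lines meeting only there become disjoint), while merging two sides consistently is governed by precisely a Ryser-type constraint, so this may not succeed as stated. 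The honest fallback is a construction that avoids projective planes altogether --- adapting Kahn's linear-size intersecting (non-partite) hypergraphs with $\tau=r$ to be $r$-partite --- the obstacle there being that imposing an $r$-partition usually forces one to pad the edges, which drives $\tau$ back down.

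I expect the main obstacle to be the logarithmic gap in stage~(1): eliminating it requires a genuinely sharper-than-first-moment method (spread, containers, or an explicit algebraic family), and it remains entirely possible that $f(r)/r\to\infty$, in which case this conjecture --- despite the matching linear lower bound of \cref{cy:f(r)bnd} --- is simply false.
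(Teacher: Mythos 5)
You have not proved anything here, and neither does the paper: the statement you were given is a \emph{conjecture} (due to Mansour, Song and Yuster \cite{yuster}), which the paper states without proof and explicitly flags as open --- indeed the authors point out that it cannot even be attacked in full until one knows whether $f(r)$ is defined (finite) for all $r$, which is unresolved for every $r>7$ with $r-1$ not a prime power. So there is no proof in the paper to compare your proposal against, and your text, by its own admission, is a research programme rather than an argument; it establishes at best $f(r)=O(r\log r)$ for prime-power $r-1$, which already follows from \tref{t:random_pp} and is weaker than the conjecture in both the bound and the range of $r$.

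Two concrete gaps in the programme itself. First, stage (1) as described is circular: to run a Frankston--Kahn--Narayanan--Park-type argument you must exhibit an $O(1/r)$-spread measure supported on line-sets $T\subseteq\tpp{r}$ of size $O(r)$ with $\tau(T)\ge r-1$, but the existence of any such $T$ of size $O(r)$ is precisely the prime-power case of the conjecture you are trying to prove; the spread machinery transfers a construction from a dense random model to a sparser one, it does not create the small witnesses. (Your explicit candidates do not obviously work either: the paper's own geometric construction keeping the secants and tangents of an oval needs $(r^2-r)/2$ lines, and nothing like $q+1$ lines dual to a conic is known to force $\tau\ge r-1$.) Second, stage (2) is exactly the wall the paper identifies: no padding or side-merging trick is known that converts a $(q+1)$-partite example into an $r$-partite intersecting one with $\tau\ge r-1$ for $r\le q$, and the paper's proof that $f(7)=17$ had to proceed by an ad hoc construction \eref{e:tight7} plus computer verification, not by any such reduction. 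Your closing caveat that the conjecture might be false is consistent with the paper, whose best general lower bound is the linear one of \cref{cy:f(r)bnd}; but as a submission for this statement, the proposal should be recorded as a plan with the two gaps above, not as a proof.
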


However, that cannot be proved until a much more
fundamental question is answered.

\begin{problem}
For which $r$ is $f(r)$ defined?
\end{problem}

We have shown here that $f(7)$ is defined, but the issue
is unresolved for all $r>7$ for which $r-1$ is not a prime power.
One direction to approach \cjref{cj:MSY} is to try to
find infinitely many $r$ for which $f(r)$ is small. A natural
way to try to do this is to find small subsets of $\tpp{r}$
with $\tau=r-1$. It is fairly easy to see that approximately half of the
lines of a truncated projective plane can be deleted to get a sparser
hypergraph with the same $\tau$.  Recall that an {\it
  arc} of a projective plane is a set of vertices without three on a
line.  {\em Conics} show that there exist $(q+1)$-arcs in $PG(2,q)$,
called {\em ovals}.  Let $P_1,\dots,P_{r}$ be the vertices of the
oval, where $r=q+1$.  We delete $P_r$ and the lines through it, to
get an $r$-partite hypergraph, the truncated projective plane.  The
sides are identified with the deleted lines $P_rP_i$, where $1\le
i\le r-1$.  We delete the lines external to the oval.
%as well as the lines through $P_1$.
That is, we keep the lines through $P_1,\dots,P_{r-1}$.  We kept
$r-2$ secants and 1 tangent through each of these $r-1$ vertices.
Therefore the number of remaining lines is $(q^2+q)/2=(r^2-r)/2$. We
claim that $\tau\ge r-1$.  Indeed, the degree of any of
$P_1,\dots,P_{r-1}$ is $r-1$, therefore the lines through $P_1$
cannot be blocked by $P_2,\dots,P_{r-1}$.  That is, either the $r-1$
lines through $P_1$ are blocked by different vertices or each of
$P_1,\dots,P_{r-1}$ is present in the cover.  In any case $\tau$ is
at least $r-1$.  This construction gives an easy way to show that
$f(4)\le6$ and $f(6)\le15$ as shown in \cite{yuster}. However,
\tref{t:random_pp} shows that it is far from optimal asymptotically.
So the challenge remains to find deterministic geometric
constructions that do much better, or indeed to show that the random
construction is essentially best possible.

\begin{problem}
How small can a subset of the lines of
$\tpp{r}$ be and still have $\tau=r-1$?
\end{problem}

Another variant is to insist that a hypergraph be linear, but not
necessarily a subset of a projective plane. It is not clear whether
being linear helps to achieve $f(r)$ or not.

\begin{problem}
Is $f(r)$ typically achieved by linear hypergraphs, non-linear hypergraphs
or both?
\end{problem}

In \tref{t:5lin} we saw that only linear hypergraphs achieve $f(r)$
for $r\le5$.  The examples that we gave in \eref{e:tight7} and
\eref{e:tight6} are not linear. However, the following is a
$6$-partite linear hypergraph with $\tau=5$:
\[
\begin{array}{lllllllll}
111111&& 212222&& 221333&& 322144&& 333213\\
413354&& 424412&& 432531&& 441245&& 514543\\
525251&& 543132&& 552315
\end{array}
\]
Clearly, $f(6)$ is achieved by both linear and non-linear hypergraphs.
We were not able to find a linear hypergraph achieving $f(7)$, and suspect
that no such hypergraph exists. Indeed, we were unable to answer the following
question:

\begin{problem}
Is there any linear intersecting $7$-partite hypergraph with $\tau=6$?
\end{problem}

Of course, the same question is interesting for other $r$ where $r-1$
is not a prime power. The analogous problem for $r$-uniform hypergraphs
is:

\begin{problem}
Is there any linear intersecting $7$-uniform hypergraph with $\tau=7$?
\end{problem}

Next we question the extent to which \tref{t:mindeg} generalises.

\begin{problem}
For each positive integer $d$ does there exist an $\epsilon>0$ such that,
for sufficiently large $r$, every $r$-partite intersecting hypergraph
with $\tau\ge(1-\epsilon)r$ has $\Delta\ge d$?
\end{problem}

Answering this might be one way to improve the lower bound on $f(r)$
given by \cref{cy:f(r)bnd}.

In studying $f(r)$ we have concentrated on the case $\nu=1$, but the
same questions can be asked for general $\nu$.  Let $f(r,k)$ be the
smallest number of edges in an $r$-partite hypergraph with $\nu=k$ and
$\tau\ge(r-1)k$. Note that $f(r,1)=f(r)$. Also, we can
remove edges from any hypergraph satisfying $\tau\ge(r-1)k$ to reach a
hypergraph with $\tau=(r-1)k$. It follows that any hypergraph achieving
$f(r,k)$ will have $\tau=(r-1)k$.  Taking $k$ disjoint copies of an
$r$-partite hypergraph with $\nu=1$ and $\tau=r-1$ shows that $f(r,k)
\le k f(r,1)$.  The results of \cite{HNS1,HNS2} imply that
$f(3,k) =k f(3,1)$, for all $k$.  Does equality hold more generally?

\begin{problem}
For which $r$ and $k$ is it true that $f(r,k) =k f(r,1)$?
\end{problem}

It would also be worth finding bounds or estimates for $f(r,k)$.

\section{Stronger versions and fractional covers}\label{s:frac}

In this section we consider various conjectures that would imply Ryser's
conjecture. We also consider versions involving the fractional covering
number $\tau^*$. In a fractional cover, each vertex is
assigned a non-negative real weight in such a way that the total
weight on each edge is at least $1$. The fractional covering number
$\tau^*$ is the least possible total of the vertex weights in a
fractional cover.

The first author has thought for some time that
the following stronger version of Ryser's
conjecture might be true for intersecting hypergraphs:

\begin{conjecture}\label{sideoredge}
  In an intersecting $r$-partite hypergraph $H$ there exists a side of
  size $r-1$ or less, or a cover of the form $e\setminus \{x\}$, for
  some $e \in H$ and $x \in e$.
\end{conjecture}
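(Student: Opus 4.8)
The plan is to attack \cjref{sideoredge} directly by a minimal-counterexample argument, reducing it to a rigid structural condition and then trying to rule that condition out. First I would record the trivial half of the dichotomy: in an $r$-partite hypergraph each side $V_i$ meets every edge, so $V_i$ is itself a cover of size $|V_i|$, and hence if some side has at most $r-1$ vertices we are already done. So I may assume every side has at least $r$ vertices, and the whole task reduces to producing a cover of the prescribed shape $e\setminus\{x\}$. Next I would reformulate the negation of this goal: a set $e\setminus\{x\}$ fails to be a cover precisely when some edge $e'$ avoids it, and since $H$ is intersecting this forces $e'\cap e=\{x\}$. Thus a counterexample $H$ must satisfy the strong \emph{witness} condition that for every edge $e$ and every vertex $x\in e$ there is an edge $w(e,x)$ meeting $e$ in exactly the single vertex $x$.

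The heart of the matter is to show that this witness condition is incompatible with every side having at least $r$ vertices. Fixing one edge $e=\{v_1,\dots,v_r\}$ and applying the condition to each $v_j$ produces edges $e_1,\dots,e_r$ with $e_j\cap e=\{v_j\}$; a short check shows that distinct $e_j,e_k$ meet at a vertex lying outside $e$, so the witnesses form a projective-plane-like pencil attached to $e$. I would note that the extremal hypergraph $\tpp{r}$ has every side of size exactly $r-1$, so it is already handled by the side option and sits precisely at the boundary of our reduction: assuming all sides have size at least $r$ places $H$ strictly ``above'' the truncated-plane regime. The plan is then to propagate this local rigidity — every edge carries such a pencil — and to push a generalisation of the counting in \lref{l:stndrdcnt}, balancing the vertex count \eref{points}, the incidence identity \eref{v,e} and the pairwise-meeting bound \eref{intersect} against the extra vertices forced by the fat sides, so as to contradict the existence of a full family of witnesses.

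For the small cases $r\le5$ I would take a more hands-on route that leans on what is already known. For these $r$ the intersecting case of Ryser's conjecture is settled (fully for $r=3$ by \cite{ryser3}, and for $r\in\{4,5\}$ by \cite{pennyscott}, whose bound $\tau<(r-\epsilon)\nu$ forces $\tau\le r-1$ when $\nu=1$), so a cover of size $r-1$ exists outright. The remaining work is to \emph{upgrade} an arbitrary $(r-1)$-cover to one of the two prescribed forms, by a case analysis on how the cover's vertices are distributed among the sides and by using the witness edges above to relocate a cover that spreads across too many sides onto a single edge.

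The main obstacle is unavoidable and should be stated plainly: since a small side or an $e\setminus\{x\}$ cover each has size at most $r-1$, \cjref{sideoredge} \emph{implies} the intersecting case $\nu=1$ of Ryser's conjecture, namely $\tau\le r-1$. For $r\ge6$ even this weaker statement is open, so no complete proof can be expected from the strategy above; the step that genuinely carries the difficulty is exactly the one that rules out the rich witness configurations, and controlling those without additional hypotheses is tantamount to resolving Ryser itself. This is why the realistic deliverables are a verification for $r\le5$ and a reduction of the general case to a clean structural dichotomy, rather than a full proof.
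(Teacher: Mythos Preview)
The statement you are attempting to prove is \cjref{sideoredge}, and the paper does \emph{not} prove it: it is posed as an open conjecture, and the only positive result the paper establishes in its direction is the fractional analogue in \tref{t:fractionalstrong}. There is therefore no ``paper's own proof'' to compare your attempt against.

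Your proposal is, correspondingly, not a proof either, and to your credit you say so: you correctly note that \cjref{sideoredge} implies the intersecting case $\tau\le r-1$ of Ryser's conjecture, which is open for $r\ge6$, so a complete argument is out of reach. The reduction you give --- that a counterexample must satisfy the witness condition ``for every $e$ and every $x\in e$ there is an edge meeting $e$ exactly at $\{x\}$'' --- is correct and is the natural first move, but you do not actually derive a contradiction from it, and the vague appeal to ``push a generalisation of the counting in \lref{l:stndrdcnt}'' is not an argument.

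The part that is presented as a genuine deliverable, the verification for $r\le5$, also has a gap. Knowing that $\tau\le r-1$ (from \cite{ryser3} and \cite{pennyscott}) gives you \emph{some} cover of size $r-1$, but the conjecture demands one of a very specific shape, and your ``upgrade'' step --- relocating an arbitrary $(r-1)$-cover onto a single edge by an unspecified case analysis --- is asserted, not carried out. It is not at all clear that the witness edges attached to a fixed $e$ let you slide a cover that is spread across several sides onto $e\setminus\{x\}$; in particular, nothing in your sketch rules out covers whose vertices lie in pairwise distinct sides and on no common edge. Until that step is actually executed, even the small-$r$ claim remains unproved.
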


As we shall see shortly, a fractional version of \cjref{sideoredge} is
true.  A natural stronger version of \cjref{sideoredge} is that for
each side $V_i$ either $|V_i|<r$ or there exists an edge $e$ such that
$e \setminus V_i$ is a cover. However, this is false for $V_1$ in the
following example.  Let $H$ have a side $V_1$ of size $2^{r-2}$ and
sides $V_i=\{a_i,b_i\}$ for $i>1$.
The vertices $\{v_P\}$ of $V_1$ are indexed
by the subsets $P\subseteq\{2,\ldots,r\}$ that contain the element $2$.
For each such $P$ there are two edges, $\{v_P\} \cup \{a_i: i\in P\}
\cup \{b_i: i\notin P\}$ and $\{v_P\} \cup \{a_i: i\notin P\} \cup
\{b_i: i\in P\}$.

\cjref{sideoredge} for general $r$-partite hypergraphs is:

\begin{conjecture}\label{cj:}
  In an $r$-partite hypergraph $H$ with $\nu(H)=k$ there exist
  sets $S_1, \ldots, S_k$, each of size at most $r-1$ and contained in a side or in an
  edge, such that $\bigcup_{i \le k}S_i$ is a cover.
\end{conjecture}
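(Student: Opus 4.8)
The plan is to prove \cjref{cj:} by induction on $k=\nu(H)$, removing one set $S_i$ per unit of matching number, with the whole argument resting on a single-deletion statement: in every $r$-partite hypergraph $H$ with $\nu(H)\ge 1$ there is a set $S$, of size at most $r-1$ and contained in a side or in an edge, whose removal reduces the matching number by one (equivalently, by at least one, which is all the induction needs). This is the natural integral strengthening of Ryser's bound, and granting it the bookkeeping is straightforward; the entire difficulty sits in this statement.

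First I would run the inductive step. Pick such an $S$, set $S_1=S$, and let $H'=H\setminus S_1$ be obtained by deleting the vertices of $S_1$ together with every edge meeting $S_1$. Then $H'$ is again $r$-partite, its sides being the sides of $H$ shrunk by the vertices of $S_1$, and $\nu(H')=k-1$; by induction there are sets $S_2,\dots,S_k$, each of size at most $r-1$, each contained in a side or an edge of $H'$, whose union covers $H'$. Two checks lift this back to $H$. Each side of $H'$ lies inside a side of $H$, and each edge of $H'$ is literally an edge of $H$, so every $S_i$ is still contained in a side or edge of $H$ of the allowed size. And $S_1\cup\dots\cup S_k$ covers $H$: an edge meeting $S_1$ is covered by $S_1$, while an edge disjoint from $S_1$ persists in $H'$ and is covered by $S_2\cup\dots\cup S_k$. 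The base case $k=0$ is vacuous, and unused indices can be filled with the empty set, so exactly $k$ sets are produced.

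The hard part, and the genuine obstacle, is the single-deletion statement itself in its integral form. Its $k=1$ case is precisely \cjref{sideoredge}, the assertion that an intersecting $r$-partite hypergraph has a cover that is either a short side or of the form $e\setminus\{x\}$; this is already open and is strictly stronger than Ryser's conjecture, since \cjref{cj:} delivers a cover of size at most $(r-1)k=(r-1)\nu$. Thus no unconditional proof of \cjref{cj:} along these lines is to be expected, and the realistic target is to push the same induction through fractionally, where the side-or-edge deletion can be arranged to lower $\nu^{*}$ and the resulting sets weighted rather than chosen integrally. In short, the induction above cleanly reduces \cjref{cj:} to the integral single-deletion lemma, which is the crux and lies beyond current methods.
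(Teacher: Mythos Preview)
The statement you are asked to prove is labeled a \emph{Conjecture} in the paper, and the paper offers no proof of it; it is presented as open, with only a fractional analogue established (via \tref{t:fractionalstrong} and \tref{lovaszfrac}). So there is no ``paper's own proof'' to match. Your write-up is honest about this: you do not claim a proof, only a reduction.

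That reduction is correct and is exactly the one the paper has in mind when it places \cjref{sideoredgegeneral} alongside \cjref{cj:} and \cjref{lovasz}. Your single-deletion statement is verbatim \cjref{sideoredgegeneral}, and the induction you describe shows cleanly that \cjref{sideoredgegeneral} $\Rightarrow$ \cjref{cj:}. One cosmetic point: after deleting $S_1$ you write $\nu(H')=k-1$, whereas the hypothesis only gives $\nu(H')\le k-1$; you already anticipated this with the remark about padding by empty sets, so the argument stands with strong induction on $\nu$. In short, your proposal is not a proof of \cjref{cj:} (nor could it be, at present), but it is a correct and clearly stated reduction to \cjref{sideoredgegeneral}, which the paper also leaves open and for which it supplies only the fractional version in \tref{lovaszfrac}.
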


Another direction of strengthening Ryser's conjecture is a ``biased'' version.
 For a set $S$ of vertices write $|S|_{bias}$ for
%$|S \cap V_1| + \sum_{1 <i \le r}\frac{|S\cap V_i|}{r-1}$.
$|S \cap V_r| + |S\setminus V_r|/(r-1)$, where $V_r$ is the last side. In \cite{Zew12} the following was conjectured: in an  $r$-partite hypergraph $H$ with sides $V_1, \ldots, V_r$ there exists a
cover $C$ such that $|C|_{bias}  \le \nu(H)$. The motivation for this conjecture was that for $r=3$ this stronger version follows from the proof of the main result in \cite{ryser3}.
 A fractional version
was proved in two different ways in two theses of students
of the first author, \cite{orikfir} and \cite{nogazewi}.
Nevertheless, 
for  $r>3$ the conjecture is false.   The example showing it is a
well known one; the
family of cross-intersecting hypergraphs whose dual achieves the bound
in the biclique edge colouring conjecture of Gy\'arf\'as and Lehel
(see \cite{CFGLT}). For $i=1,\dots,r-2$ we take an edge $e_i$ that
uses the first vertex on side $r$ and the $i$-th vertex on side $j$
for $j=1,\dots,r-1$. Now for each permutation $\sigma$ of
$\{1,\dots,r-1\}$ add an edge that uses the second vertex on side $r$
and vertex $\sigma(j)$ on side $j$ for $j=1,\dots,r-1$.  Next, on each
of the first $r-1$ sides break vertex $r-1$ apart so that all lines
through it now go through a different vertex on that side.
Neither of the two vertices on the last side
are a cover on their own. Moreover, by \cite{CFGLT}, any cover that
avoids using a vertex from the last side has size at least
$2r-4>r-1$. Hence the ``biased'' conjecture fails for all $r>3$.

However, a fractional version is true, which yields also fractional versions of Conjecture \ref{cj:} and thus of Conjecture \ref{sideoredge}.
  An $r$-uniform hypergraph $H$ is said to be $(a,b)$-partitioned if
  $V(H)=V_1 \cup V_2$, where $V_1 \cap V_2 =\emptyset$, and
  $|e \cap V_1|=a$ and $|e \cap V_2|=b$ for every
  $e\in H$.

\begin{theorem}\label{t:fractionalstrong}
  Given a $(1,r-1)$-partitioned hypergraph with sides $V_1,\,V_2$
  there exist numbers $\beta_u \in \{0,1\}$ for each $u \in V_1$ and
  $\alpha_e \in \R^+$ for each $e \in H$, such that:
\begin{enumerate}
\item
$\sum_{u \in V_1} \beta_u + \sum_{e \in H} \alpha_e \le \nu(H)$, and:
\item
$\sum_{u \in V_1}\beta_u \chi_{\{u\}} + \sum_{e \in H} \alpha_e \chi_{e \setminus V_1}$ is a fractional cover for $H$.
\end{enumerate}
\end{theorem}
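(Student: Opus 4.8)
The plan is to exhibit the desired $\beta$'s and $\alpha$'s as the output of an LP-duality argument combined with a direct combinatorial construction. First I would observe that a $(1,r-1)$-partitioned hypergraph $H$ carries a natural graph structure: since every edge meets $V_1$ in exactly one vertex, $H$ is partitioned into ``stars'' $H_u = \{e \in H : e\cap V_1 = \{u\}\}$ indexed by $u\in V_1$. The condition we must meet is that, for every edge $e$, either $\beta_u=1$ for the vertex $u = e\cap V_1$, or the $\alpha$-weight $\sum_{f\ni w} \alpha_f$ accumulated on some vertex $w \in e\setminus V_1$ is at least $1$ in aggregate along $e$; i.e. $\sum_{w\in e\setminus V_1}\big(\sum_{f: w\in f}\alpha_f\big) + \beta_u \ge 1$. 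So the $\alpha$-part must fractionally cover the ``sub-hypergraph on $V_2$'' consisting of those stars $H_u$ for which $\beta_u = 0$, in the ordinary fractional sense applied to the truncated edges $e\setminus V_1$.

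The key reduction is: choose a maximal matching $M$ in $H$ (so $|M|=\nu(H)$, after possibly enlarging to maximal), and let $U \subseteq V_1$ be the set of $V_1$-vertices used by $M$; set $\beta_u = 1$ for $u\in U$ and $\beta_u=0$ otherwise. The first-sum contributes exactly $\nu(H)$ minus the ``budget'' we still have, so in fact I would instead allocate more carefully: pick $M$ greedily so that the stars it does \emph{not} touch form a sub-hypergraph $H'$ (on vertex set $V_2$, edges $e\setminus V_1$) whose \emph{fractional matching number} is small. The crucial claim to establish is $\tau^*(H') \le \nu(H) - |U|$, where $U$ is the set of $V_1$-vertices of $M$ and $H'$ is the truncation of the untouched stars. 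Granting this, LP duality ($\tau^*=\nu^*$ for the truncated hypergraph, since fractional covering number equals fractional matching number) furnishes $\alpha_e \in \R^+$ on $H'$ with $\sum_e\alpha_e = \tau^*(H') \le \nu(H)-|U|$ and $\sum_{w\in e\setminus V_1}\sum_{f\ni w}\alpha_f \ge 1$ for each untouched edge; set $\alpha_e=0$ on the touched stars. Then $\sum\beta_u + \sum\alpha_e \le |U| + (\nu(H)-|U|) = \nu(H)$, giving (1), while (2) holds because touched edges are covered by their $\beta$-vertex and untouched edges by the fractional $\alpha$-cover.

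The main obstacle is proving the claim $\tau^*(H') \le \nu(H)-|U|$, i.e.\ that after removing a well-chosen maximal matching's stars, the leftover truncated hypergraph has small fractional cover number. I would attack this by choosing $M$ not arbitrarily but to be a matching that is \emph{maximum} among those avoiding a given set of ``expensive'' $V_1$-vertices, or by an exchange/augmentation argument: if $\tau^*(H')$ were too large then $\nu^*(H') = \tau^*(H')$ is large, so $H'$ contains a large fractional matching, which (using integrality of matching polytopes is \emph{not} available here, but) can be combined with the edges of $M$ through the $V_1$-side — each untouched star uses a $V_1$-vertex not in $U$, and fractional matchings on the disjoint star-indexed pieces add — to contradict $\nu(H) = \nu^*$-type bounds or the maximality of $M$. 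Concretely, since distinct stars use distinct $V_1$-vertices, a fractional matching of $H'$ supported on $k$ distinct stars together with $M$ yields a fractional matching of $H$ of size $|M| + (\text{that fractional matching's size})$ once we note the $V_1$-vertices keep the pieces disjoint; pushing this to an \emph{integral} matching of size $> \nu(H)$ requires the observation that a fractional matching concentrated star-by-star can be rounded within each star (each star is an intersecting hypergraph on $V_2$, where we only need one whole edge per star). That rounding step — reducing a star's fractional contribution to a single edge — is where the argument must be done with care, and I expect it to be the technical heart: it is essentially the statement that for an intersecting hypergraph the fractional matching number restricted to any single star is at most $1$, so a support of $k$ stars gives integral matching size $k$, and $|M|+k > \nu(H)$ is the sought contradiction, forcing $\tau^*(H') = \nu^*(H') \le \nu(H) - |U|$.
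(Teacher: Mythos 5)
The central gap is in the step where you pass from $\tau^*$ to the required edge--weights. Conclusion (2) demands a cover of the special form $\sum_e \alpha_e \chi_{e\setminus V_1}$, i.e.\ a nonnegative combination of characteristic functions of \emph{whole truncated edges} --- the quantity the paper denotes $\tau_s$. Your claim that LP duality ($\tau^*=\nu^*$) ``furnishes $\alpha_e$ with $\sum_e\alpha_e=\tau^*(H')$ and $\sum_{w\in e\setminus V_1}\sum_{f\ni w}\alpha_f\ge1$ for each untouched edge'' is not what duality gives: an optimal fractional matching is an edge-weighting with vertex loads \emph{at most} $1$, and it need not induce a fractional cover at all; the minimum total edge weight whose induced vertex function covers $H'$ is $\tau_s(H')$, which neither equals nor is bounded by $\tau^*(H')$ via any duality statement. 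The only elementary bound of the right shape is $\tau_s\le\nu$ (put weight $1$ on each edge of a maximal matching), and that is too weak here: what must be shown is the existence of $J\subseteq V_1$ with $|V_1\setminus J|+\tau_s\bigl(\bigcup_{i\in J}A_i\bigr)\le\nu(H)$, and bounding $\tau_s$ of the truncated union by its \emph{integral} matching number fails already when $\nu(H)=1$, because the truncated edges of a single star can be pairwise disjoint, making the truncation's matching number large while $\nu(H)=1$. This is precisely the point at which the paper resorts to topology: \tref{t:defic1} lower-bounds $\nu(H)$ by $m-d$, and \tref{t:abm}, namely $\eta_H(\ci(L(\cdot)))\ge\tau_s$, upper-bounds $\tau_s$ of the deficient union by $|J|-d$; your sketch contains no substitute for either ingredient.

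The exchange/rounding argument you describe as the ``technical heart'' also breaks down. Edges in distinct stars have distinct $V_1$-vertices but may share vertices of $V_2$, so choosing one edge from each of $k$ stars does \emph{not} yield a matching of size $k$; likewise a fractional matching of the truncated leftover cannot simply be added to $M$, since its edges may load $V_2$-vertices already saturated by $M$, and a single star may carry total weight exceeding $1$ (the parenthetical assertion that ``each star is an intersecting hypergraph on $V_2$'' is false after deleting the common $V_1$-vertex). Finally, your opening normalisation --- taking $M$ maximum, so $|U|=|M|=\nu(H)$ --- leaves a budget of $0$ for the $\alpha$'s, which already shows the bookkeeping cannot be organised around a maximum matching; the correct trade-off between the $\beta$-paid stars and the fractional cover of the rest is exactly what the topological deficiency argument in the paper manages, and the proposal does not recover it.
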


\begin{remark}
The theorem implies that in a $(1,r-1)$-partitioned hypergraph $\tau^* \le (r-1)\nu$. This was already known, since F\"uredi \cite{furedi}
proved this inequality for any $r$-uniform hypergraph not containing a copy of the $r$-uniform projective plane.
\end{remark}

For the proof we shall resort to topological
notions, in particular that of ``homological connectivity''.
A {\em simplicial complex} (or plainly a {\em complex})
is a closed downwards hypergraph, namely a collection of finite sets, called
``simplices'', containing with each simplex also all of its subsets.
The {\em homological connectivity} $\eta_H(X)$ of a complex $X$
is the minimal $k$ for
which all homology groups $H_i(X),~i \le k$, vanish, plus $2$ (the
addition of $2$ simplifies the formulation of several results). Intuitively,
$\eta_H(X)$ is the dimension of the smallest ``hole'' in $X$.  In
particular, $\eta_H \ge 1$ means ordinary connectivity of the
complex. For example, if $X$ is a $1$-dimensional complex (i.e., a
graph) that is a cycle, then there is a hole of dimension $2$, and no
hole of dimension $1$, and hence $\eta_H(X)=2$. We refer the reader to
\cite{ab, abm, meshulam} for some basic facts about connectivity.

Given a family $\ca=(A_1, \ldots ,A_m)$ of sets, a set formed by a partial choice
function from the $A_i$'s is said to be a {\em (partial) rainbow set}. A complete rainbow set is called a {\em transversal}.
Given a  complex $\cc$ on $\bigcup_{i \le m }A_i$  a rainbow set belonging to $\cc$ is called an {\em $(\ca,\cc)$-transversal}.
The maximal size of a $(\ca,\cc)$-transversal is denoted by $\nu(\ca,\cc)$.
We define the topological deficiency $\defic(\ca,\cc)$ as the maximum
of $|I|-\eta_H(\cc[\bigcup_{i \in I}A_i])$ over all $I \subseteq [m]$.
There is a topological deficiency version of Hall's theorem \cite{ah}:

\begin{theorem}\label{t:defic1}

$\nu(\ca,\cc) \ge m-\defic(\ca,\cc)$.
\end{theorem}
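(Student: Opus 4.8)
The plan is to deduce the deficiency bound from the deficiency-zero case of the topological Hall theorem --- i.e.\ the statement that if $\eta_H\bigl(\cc[\bigcup_{i\in I}A_i]\bigr)\ge|I|$ for every $I\subseteq[m]$ then there is a full $(\ca,\cc)$-transversal of size $m$ --- by a padding argument that mimics the classical reduction of the deficiency form of Hall's theorem to its ordinary form. Write $d=\defic(\ca,\cc)$, so that $\eta_H\bigl(\cc[\bigcup_{i\in I}A_i]\bigr)\ge|I|-d$ for all $I$.

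First I would enlarge the instance. Introduce $d$ brand-new vertices $Z=\{z_1,\dots,z_d\}$ disjoint from $V(\cc)$, replace each set by $A_i'=A_i\cup Z$ while keeping the same index set $[m]$, so that the new vertices are universally available, and extend $\cc$ to a complex $\cc'$ on $V(\cc)\cup Z$ by declaring a set $\sigma$ to be a simplex of $\cc'$ exactly when $\sigma\cap V(\cc)\in\cc$. In other words $\cc'=\cc*2^{Z}$, the join of $\cc$ with the full simplex on $Z$, so the $z_j$ act as cone points. The key point is that this padded instance has deficiency $0$: for any nonempty $I\subseteq[m]$ one has $\bigcup_{i\in I}A_i'=\bigl(\bigcup_{i\in I}A_i\bigr)\cup Z$, and the induced subcomplex is $\cc'[\bigcup_{i\in I}A_i']=\cc[\bigcup_{i\in I}A_i]*2^{Z}$. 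Joining with $2^{Z}$ is the same as taking an iterated cone (one cone per vertex $z_j$), which is contractible, so all reduced homology vanishes and $\eta_H=\infty\ge|I|$. Thus $\defic(\ca',\cc')=0$, and the deficiency-zero topological Hall theorem yields a full $(\ca',\cc')$-transversal $T'$ of size $m$.

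Finally I would pull the transversal back. Since $T'$ is a rainbow set with $m$ vertices chosen from $m$ distinct indices, it is a set of $m$ distinct vertices and hence contains at most $|Z|=d$ of the padding vertices. Put $T=T'\cap V(\cc)$. Because $T'\in\cc'$ we have $T=T'\cap V(\cc)\in\cc$, and since $A_i'\cap V(\cc)=A_i$, the set $T$ is a partial rainbow set for $\ca$ lying in $\cc$, that is, an $(\ca,\cc)$-transversal. Its size is $|T|\ge m-d$, giving $\nu(\ca,\cc)\ge m-\defic(\ca,\cc)$.

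The main obstacle is supplying the engine, the deficiency-zero case. If it may be quoted (it is the Aharoni--Haxell topological Hall theorem) the argument above is essentially complete, and the only care needed is the connectivity bookkeeping for the join and the observation that a complete transversal is a system of distinct representatives, so that at most $d$ padding vertices occur. If instead it must be proved from the primitives recalled here, I would establish it by induction on $m$ using Meshulam's vertex inequality $\eta_H(X)\ge\min\{\eta_H(X\setminus v),\,\eta_H(\mathrm{lk}(v,X))+1\}$ --- which itself comes from the Mayer--Vietoris sequence for $X=(X\setminus v)\cup\mathrm{star}(v)$ with $\mathrm{star}(v)$ contractible and $(X\setminus v)\cap\mathrm{star}(v)=\mathrm{lk}(v,X)$ --- choosing at each step a vertex of some $A_i$ to represent index $i$ and passing to the link; this inductive topological step is where the real work lies.
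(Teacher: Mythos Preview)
Your argument is correct and matches the paper's own approach. The paper does not give a detailed proof of this theorem either: it cites \cite{ah} and \cite{meshulam} for the $d=0$ case and remarks that ``the case of general $d$ is obtained by the familiar device of adding `leeway','' which is precisely your padding by $d$ cone points $Z$ and passage to $\cc'=\cc*2^{Z}$.
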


\begin{theorem}\label{t:defic2}
   If
  $\eta_H(\cc[\bigcup_{i \in I}A_i]) \ge |I|-d$
  for all $I \subseteq [m]$ then there exists a partial
  rainbow set belonging to $\cc$ of size $m-d$.
\end{theorem}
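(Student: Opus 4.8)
The plan is to deduce Theorem~\ref{t:defic2} from the topological deficiency version of Hall's theorem, Theorem~\ref{t:defic1}, by a padding argument: we artificially enlarge the family $\ca$ with $d$ dummy sets chosen so that the deficiency of the enlarged family becomes exactly $0$, apply Theorem~\ref{t:defic1}, and then discard whatever dummy elements were used by the resulting transversal. The key is to pick the dummy sets so that (i) they can always be rainbow-chosen, hence do not help to reduce deficiency, and (ii) they do not raise the deficiency of any subfamily.

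First I would introduce $d$ new ground elements $z_1,\dots,z_d$, pairwise distinct and disjoint from $\bigcup_{i\le m}A_i$, and form the family $\ca'=(A_1,\dots,A_m,\{z_1\},\dots,\{z_d\})$ of size $m+d$. I would extend the complex $\cc$ to a complex $\cc'$ on the enlarged ground set by declaring a set $\sigma$ to be a simplex of $\cc'$ precisely when $\sigma\setminus\{z_1,\dots,z_d\}\in\cc$; equivalently, $\cc'$ is the join of $\cc$ with the full simplex on $\{z_1,\dots,z_d\}$. Joining with a simplex (a contractible complex) does not change homological connectivity except by shifting it up by the dimension of the added simplex: for any vertex set $W$ and any subset $Z\subseteq\{z_1,\dots,z_d\}$ one has $\eta_H\big(\cc'[W\cup Z]\big)=\eta_H\big(\cc[W]\big)+|Z|$ (using $\eta_H(\emptyset\text{-complex})$ conventions so that the formula also holds when $W\cap\bigcup A_i$ is empty). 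This is the standard fact that $\tilde H_{k}(X*\Delta^{j})\cong\tilde H_{k-j-1}(X)$, combined with the $+2$ shift built into the definition of $\eta_H$.

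Next I would bound $\defic(\ca',\cc')$. Take any $J\subseteq[m+d]$; write $I=J\cap[m]$ and let $t=|J|-|I|$ be the number of dummy indices in $J$, so the corresponding dummy elements form a set $Z$ with $|Z|=t$. Then
\[
|J|-\eta_H\big(\cc'[\textstyle\bigcup_{j\in J}A'_j]\big)
 =\big(|I|+t\big)-\Big(\eta_H\big(\cc[\textstyle\bigcup_{i\in I}A_i]\big)+t\Big)
 =|I|-\eta_H\big(\cc[\textstyle\bigcup_{i\in I}A_i]\big)\le d,
\]
by the hypothesis on $\ca,\cc$. Hence $\defic(\ca',\cc')\le d$. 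On the other hand, picking $J=\{m+1,\dots,m+d\}$ gives $d-\eta_H(\cc'[\{z_1,\dots,z_d\}])=d-(0+d)$... wait, here I must be careful with the convention: the full simplex on $d$ vertices has $\eta_H=d+1$ in the ``$+2$'' convention only if $d\ge 1$ forces a shift; in any case this term is $\le d$, consistent with the bound. So $\defic(\ca',\cc')\le d$, and Theorem~\ref{t:defic1} yields an $(\ca',\cc')$-transversal $R'$ with $|R'|\ge (m+d)-d=m$.

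Finally I would set $R=R'\setminus\{z_1,\dots,z_d\}$. Since $R'$ is a rainbow set for $\ca'$ and the last $d$ sets of $\ca'$ are the singletons $\{z_j\}$, the set $R$ is a partial rainbow set for $\ca$, and $|R|\ge |R'|-d\ge m-d$. Membership in $\cc$ is immediate: $R'\in\cc'$ means $R'\setminus\{z_1,\dots,z_d\}=R\in\cc$ by the definition of $\cc'$. This produces the required partial rainbow set of size $m-d$ (and if $|R|>m-d$ we may discard vertices, as $\cc$ is downward closed).

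The step I expect to need the most care is the homological identity $\eta_H(\cc'[W\cup Z])=\eta_H(\cc[W])+|Z|$, including the degenerate cases $W=\emptyset$ or $Z=\emptyset$ and the precise bookkeeping of the ``$+2$'' in the definition of $\eta_H$; getting the offsets exactly right is what makes the deficiency computation come out to $\le d$ rather than $\le d+O(1)$. Everything else is a direct invocation of Theorem~\ref{t:defic1} together with the observation that adjoining cone points to a complex only raises its connectivity.
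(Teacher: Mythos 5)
Your argument does arrive at the stated conclusion, and it is in the spirit of what the paper intends by ``adding leeway''. But two things need to be said. First, the step you yourself flag as the delicate one is false as stated. Your $\cc'$ is the join of $\cc$ with the \emph{full} simplex on $\{z_1,\dots,z_d\}$, and the join of any complex with a nonempty full simplex is a cone: every face of $\cc'[W\cup Z]$ stays a face after adding a vertex $z\in Z$, so for $Z\neq\emptyset$ all reduced homology of $\cc'[W\cup Z]$ vanishes. Hence $\eta_H(\cc'[W\cup Z])$ is not $\eta_H(\cc[W])+|Z|$; it exceeds every finite bound. The formula $\tilde H_k(X*\Delta^j)\cong\tilde H_{k-j-1}(X)$ that you quote is the suspension formula for the join with the \emph{boundary} of a simplex (a sphere), not with the simplex itself, and likewise the full simplex on $d$ vertices does not have $\eta_H=d+1$. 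Your overall computation survives only because you need just the inequality $\eta_H\bigl(\cc'[\bigcup_{j\in J}A_j']\bigr)\ge |J|-d$, which holds a fortiori for a cone; with that correction, $\defic(\ca',\cc')\le d$, the application of \tref{t:defic1}, the stripping of the dummies, and the downward-closure step are all fine.

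Second, the padding does no work in your proof. The hypothesis of the theorem says exactly that $|I|-\eta_H(\cc[\bigcup_{i\in I}A_i])\le d$ for all $I\subseteq[m]$, i.e.\ $\defic(\ca,\cc)\le d$, so \tref{t:defic1} applied directly to $(\ca,\cc)$ already gives $\nu(\ca,\cc)\ge m-d$, and a rainbow set in $\cc$ of size exactly $m-d$ follows because $\cc$ is closed under taking subsets. Your enlarged system still only satisfies the deficiency bound $d$ (not $0$), so adding the dummy singletons and then invoking the deficiency version again is a detour. The paper's ``leeway'' device is meant for a different reduction: one pads so as to force the genuine Hall condition and then applies the $d=0$ topological Hall theorem of \cite{ah,meshulam}; if instead you allow yourself \tref{t:defic1} as a black box, the theorem is an immediate corollary with no padding at all.
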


The topological Hall theorem is the case $d=0$.  It appears in
\cite{ah} in a homotopical version (and even this, only implicitly),
and explicitly in \cite{meshulam}. The case of general $d$ is obtained
by the familiar device of adding ``leeway''.

For any subset $S$ of $V$, we denote by $\chi_S$ the characteristic
function of $S$.  We shall need another definition, about a special
type of fractional covers. Let
$$\tau_s(H)=\min \Big\{\sum_{e \in H} \alpha_e: \sum \alpha_i\chi_e
\text{ is a cover for }H\Big\}.$$

A result connecting these concepts is:

\begin{theorem}\label{t:abm}\cite{abm}
  Let $H$ be a hypergraph and let $L(H)$ be its line graph. Then
  $\eta_H(\ci(L(H)) \ge \tau_s(H)$.
\end{theorem}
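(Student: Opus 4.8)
The plan is to prove $\eta_H(\ci(L(H)))\ge\tau_s(H)$ — equivalently, since $\eta_H$ is integer valued, $\eta_H(\ci(L(H)))\ge\lceil\tau_s(H)\rceil$ — by induction on the number of edges of $H$, using the standard deletion recursion for independence complexes. For a graph $G$ and a vertex $v$, the faces of $\ci(G)$ containing $v$ form the cone $\{v\}*\ci(G-N_G[v])$ (where $N_G[v]$ is the closed neighbourhood of $v$), while the faces avoiding $v$ form $\ci(G-v)$; these two subcomplexes meet in $\ci(G-N_G[v])$ and their union is $\ci(G)$. As the cone is contractible, the Mayer--Vietoris sequence yields
\[
\eta_H(\ci(G)) \ge \min\{\eta_H(\ci(G-v)),\ \eta_H(\ci(G-N_G[v]))+1\},
\]
which I would quote from \cite{meshulam,abm} or derive in a line. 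Specialising to $G=L(H)$, a vertex $v$ is an edge $e\in H$, and $G-v=L(H\setminus e)$ while $G-N_G[v]=L(H_e)$, where $H_e=\{f\in H: f\cap e=\emptyset\}$. Thus it suffices to find a single edge $e$ for which both $\eta_H(\ci(L(H\setminus e)))\ge\lceil\tau_s(H)\rceil$ and $\eta_H(\ci(L(H_e)))+1\ge\lceil\tau_s(H)\rceil$.

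The base case is immediate: if $\tau_s(H)\le1$ then $\lceil\tau_s(H)\rceil\le1$, and any nonempty complex has $\eta_H\ge1$. For the inductive step I first record how $\tau_s$ moves under the two operations. The constraints of the program $\tau_s(H)=\min\{\sum_e\alpha_e : \sum_e\alpha_e\,\chi_e \text{ a cover},\ \alpha\ge0\}$ read $\sum_e|e\cap f|\,\alpha_e\ge1$ for each $f\in H$; since the intersection matrix $(|e\cap f|)$ is symmetric, LP duality gives the self-dual form $\tau_s(H)=\max\{\sum_f\beta_f:\sum_f|e\cap f|\,\beta_f\le1\ \forall e,\ \beta\ge0\}$. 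The ``$+1$'' branch is then easy: given an optimal cover $\alpha$ of $H_e$, assigning weight $1$ to $e$ and keeping $\alpha$ produces a cover of $H$, because each $f$ disjoint from $e$ is already covered by $\alpha$ while each $f$ meeting $e$ (including $e$ itself) is covered by the weight on $e$ since $|e\cap f|\ge1$. Hence $\tau_s(H)\le\tau_s(H_e)+1$ for every $e$, so by induction $\eta_H(\ci(L(H_e)))+1\ge\lceil\tau_s(H_e)\rceil+1\ge\lceil\tau_s(H)\rceil$, and the second branch holds for all $e$.

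It remains to choose $e$ so that the deletion branch $\lceil\tau_s(H\setminus e)\rceil\ge\lceil\tau_s(H)\rceil$ also holds, and this is where the genuine difficulty lies. Fix an optimal dual $\beta^\ast$. If $\beta^\ast_e=0$ for some edge $e$, then restricting $\beta^\ast$ to $H\setminus e$ remains dual-feasible, so $\tau_s(H\setminus e)\ge\sum_{f\ne e}\beta^\ast_f=\tau_s(H)$ and that edge works. The main obstacle is the case where every optimal dual is fully supported; by complementary slackness this forces all primal constraints to be tight, a rigid situation in which deleting any single edge strictly lowers $\tau_s$ (as already happens for a disjoint union of paths of length two). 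The point to exploit is that removing $e$ also deletes its own active dual constraint, freeing slack that lets the remaining dual weights be raised; the crux is to show quantitatively that this redistribution keeps the optimum of the reduced program strictly above $\lceil\tau_s(H)\rceil-1$, so that the ceiling — and hence the required homological vanishing — is preserved. I expect this last estimate, controlling exactly how far $\tau_s$ can fall when an essential edge is removed, to be the hardest and most delicate part; once it is in place, the theorem follows by combining the recursion, the base case, and the ``$+1$'' branch.
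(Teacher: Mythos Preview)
First, note that the paper does not prove \tref{t:abm}; it is quoted from \cite{abm} and used as a black box. So there is no in-paper argument to compare against, and what follows is an assessment of your proposal on its own terms, together with a word on how the proof in \cite{abm} actually goes.

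Your outline has a genuine gap, and it is not one that can be closed along the lines you suggest. Take $H$ to consist of $n$ pairwise disjoint singleton edges. Then $\tau_s(H)=n$, while for \emph{every} edge $e$ one has $\tau_s(H\setminus e)=n-1$, so $\lceil\tau_s(H\setminus e)\rceil=\lceil\tau_s(H)\rceil-1$ and the deletion branch fails uniformly. The theorem is of course true here --- $L(H)$ consists of isolated vertices, so $\ci(L(H))$ is a full simplex and $\eta_H=\infty$ --- but your scheme, which never invokes that cone observation, would simply stall. More tellingly, the recursion-controlled parameter $\psi(G)=\max_v\min\{\psi(G-v),\,\psi(G-N_G[v])+1\}$ (with $\psi(\emptyset)=0$), which is exactly what the Mayer--Vietoris step bounds from below, equals $0$ on this $L(H)$, whereas $\tau_s=n$. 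So the inductive invariant you are trying to push through the recursion is strictly stronger than what the recursion can deliver; the ``redistribution'' estimate you hope for is not merely delicate but false. Special-casing isolated vertices removes this particular example but does not address the underlying loss of information at each deletion.

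The proof in \cite{abm} takes an entirely different, algebraic route. One first establishes a spectral lower bound on $\eta_H$ of a flag complex in terms of Laplacian eigenvalues (a Garland-type argument). Applied to $\ci(L(H))$, the natural vector representation $e\mapsto\chi_e\in\R^{V(H)}$ has Gram matrix $(|e\cap f|)_{e,f}$ --- exactly the constraint matrix of your self-dual LP for $\tau_s$ --- and the spectral bound then unwinds to give $\eta_H(\ci(L(H)))\ge\tau_s(H)$. No edge-by-edge recursion is involved, and the deletion obstacle never arises.
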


With the preliminaries at hand, we can now prove
\tref{t:fractionalstrong}.

\begin{proof}
Let $V_1=\{v_1,v_2,\ldots,v_m\}$.
For $i \le m$ let $A_i=\big\{f \subseteq V_2 \mid \{v_i\} \cup f \in H\big\}$,
and let $K= \bigcup_{i \le m}A_i$. Let $\cc=\ci(L(K))$. So,
the vertices of $\cc$ are the $(r-1)$-tuples belonging to $K$ and the
simplices are the matchings. Denote by $\Gamma$ the resulting ISR
system. Clearly, $\nu(\Gamma)=\nu(H)$.  Write $d$ for
$\defic(\Gamma)$. By \tref{t:defic1}, $\nu(\Gamma) \ge m-d.$ Let $J$
be a subset of $[m]$ such that $d=|J|-\eta_H(\cc[\bigcup_{i \in J}A_i])$.
By \tref{t:abm} we have $\tau_s(\bigcup_{i \in J}A_i) \le|J|-d$,
so there are numbers $\alpha_e,~~e \in K$ such that
$\sum_{e\in K}\alpha_e \le |J|-d$ and $\sum_{e \in K}\chi_e$ is a
fractional cover for $\bigcup_{i \in J}A_i$. Taking
$\beta_{u_i}=1$ for $i \notin J$ and  $\beta_{u_j}=0$ for $j \in  J$
completes the proof of the theorem.
\end{proof}

In particular, if $\nu(H)=1$ then \tref{t:fractionalstrong} says that
either $|V_1|=1$ or there exists a fractional cover of size at most
$r-1$, consisting of a linear combination with positive coefficients
of characteristic functions of sets of the form $e \setminus V_1$.

\bigskip

Another stronger version of Ryser's conjecture, conjectured
independently by Lov\'asz \cite{lovaszthesis} at around the same time
as Ryser made his conjecture, is:

\begin{conjecture}\label{lovasz}
  In an $r$-partite hypergraph $H$ there exists a set $S$ of vertices
  of size at most $r-1$, such that $\nu(H-S) \le \nu(H)-1$.
\end{conjecture}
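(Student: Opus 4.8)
The plan is to attack \cjref{lovasz} directly, in its integral form, by recasting it as a hitting-set problem and feeding that into the deficiency version of Hall's theorem. Writing $\nu=\nu(H)$, deleting $S$ removes exactly the edges meeting $S$, so a matching survives in $H-S$ precisely when none of its edges meets $S$; hence $\nu(H-S)\le\nu-1$ holds if and only if $S\cap V(M)\ne\emptyset$ for every maximum matching $M$ of $H$, where $V(M)=\bigcup_{e\in M}e$. Thus \cjref{lovasz} is equivalent to the assertion that the family $\mathcal{F}=\{V(M):M\text{ a maximum matching of }H\}$ has a hitting set of at most $r-1$ vertices. Since the matchings of $H$ are exactly the faces of $\cc=\ci(L(H))$ and the maximum matchings are its top-dimensional faces, this is a question about a very structured complex, and the aim would be to locate $S$ inside a single side or a single edge so as to also confirm the refined shape predicted by \cjref{sideoredge} (indeed, iterating a single such $S$ and passing to $H-S$, still $r$-partite, rebuilds a cover of size $(r-1)\nu$, which is how \cjref{lovasz} refines Ryser and feeds \cjref{cj:}).

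For the construction I would fix a maximum matching, single out one side $V_1$ so that $H$ becomes $(1,r-1)$-partitioned as in \tref{t:fractionalstrong}, and build the system of representatives used there: for each $v_i\in V_1$ put $A_i=\{f:\{v_i\}\cup f\in H\}$, set $K=\bigcup_i A_i$ and $\cc=\ci(L(K))$, so that the simplices of $\cc$ are the matchings among the truncated edges. Running the deficiency versions of Hall's theorem (\tref{t:defic1}, \tref{t:defic2}) on this system produces an \emph{integral} partial rainbow matching whose size is governed by the homological connectivity $\eta_H(\cc)$; the colours $v_i$ that remain unrepresented are exactly those forced into the cover, while the represented colours contribute truncated edges $e\setminus V_1$. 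The intended payoff is that the unrepresented $V_1$-vertices together with one truncated edge form a set $S$ that no maximum matching can avoid, with $|S|\le r-1$, and which lies in a side or an edge as required.

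The hard part is the connectivity estimate, and it is exactly where integrality must be forced. The only quantitative handle available, \tref{t:abm}, bounds $\eta_H(\ci(L(H)))$ below only by $\tau_s(H)$, a \emph{fractional} covering-by-edges parameter, so the deficiency machinery natively returns weights $\beta_u\in\{0,1\}$ on $V_1$ together with \emph{fractional} coefficients $\alpha_e$ on the truncated edges, i.e.\ a fractional hitting set of total weight $\le r-1$ rather than an honest set of $\le r-1$ vertices. Turning the $\alpha$-part into a single integral edge is the crux: when $\nu=1$ the whole statement collapses to $\tau\le r-1$ for intersecting $r$-partite hypergraphs, which is Ryser's conjecture itself, so no blanket rounding argument can work. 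The substance of the plan, therefore, is to use the rigidity of the $r$-partite structure---each side is itself a cover, and distinct colours in an ISR must take distinct representatives---to pin the fractional optimum to a $0/1$ solution supported on one side plus one edge; overcoming this integrality gap, and not the topological input, is the genuine obstacle that separates \cjref{lovasz} from the fractional conclusion that \tref{t:abm} already secures unconditionally.
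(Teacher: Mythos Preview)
The statement you are attempting is \emph{Conjecture}~\ref{lovasz}, not a theorem: the paper does not prove it, and in fact you yourself diagnose exactly why no proof along these lines can succeed. Your reformulation is correct --- asking that every maximum matching be hit by a set of at most $r-1$ vertices --- but as you note, when $\nu(H)=1$ this is literally the assertion $\tau(H)\le r-1$ for intersecting $r$-partite hypergraphs, i.e.\ Ryser's conjecture. So Conjecture~\ref{lovasz} is at least as hard as Ryser, and no amount of machinery on top of \tref{t:abm} and \tref{t:defic1} can close the integrality gap without resolving Ryser itself. Your proposal is thus an honest description of an approach together with its obstruction, not a proof.

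What the paper actually establishes is the fractional relaxation, Theorem~\ref{lovaszfrac}: there is a set $S=e\setminus\{v\}$ with $\nu^*(H-S)\le\nu^*(H)-1$. Its proof is short and does \emph{not} use the topological route you sketch; it goes through Lemma~\ref{l:smallh}, a polytope-vertex argument showing that some optimal fractional cover $u$ has $u(v)=0$ for a vertex $v$ lying on an edge $e$, whence $u$ restricted to $V\setminus(e\setminus\{v\})$ still fractionally covers $H-(e\setminus\{v\})$ and has lost weight at least $1$. The topological machinery (\tref{t:abm}, \tref{t:defic1}) is deployed elsewhere, for \tref{t:fractionalstrong}, which is a fractional version of the \emph{biased} cover conjecture rather than of Conjecture~\ref{lovasz}. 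So your plan both aims at a statement the paper leaves open and, for the fractional shadow that \emph{is} proved, invokes heavier tools than are needed.
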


 The strengthening of \cjref{lovasz} along the lines of \cjref{sideoredge} is:

\begin{conjecture}\label{sideoredgegeneral}
  In an $r$-partite hypergraph $H$ there exists a set $S$ of size
  $r-1$ or less, contained in an edge or in a side, whose removal
  reduces the matching number by at least $1$.
\end{conjecture}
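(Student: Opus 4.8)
The plan is to attack \cjref{sideoredgegeneral} directly at the integral level, running the same topological deficiency engine (\tref{t:defic1}, \tref{t:defic2}, \tref{t:abm}) that proves \tref{t:fractionalstrong}, but arranging the argument so that the sole place where fractionality was forced becomes one isolated, nameable step. First I would restate the target as a blocking problem: for a vertex set $S$ one has $\nu(H-S)\le\nu(H)-1$ exactly when $V(M)\cap S\ne\emptyset$ for every maximum matching $M$, since a matching survives the deletion of $S$ iff it avoids $S$. Hence it suffices to find, inside a single side or inside a single edge, at most $r-1$ vertices meeting every maximum matching. The instance $\nu(H)=1$ is precisely \cjref{sideoredge}, so any complete solution must in particular settle that intersecting case, and I treat it as the heart of the difficulty.

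Second, I would build the rainbow system used for \tref{t:fractionalstrong}: fix a side $V_1=\{v_1,\dots,v_m\}$, regard $H$ as $(1,r-1)$-partitioned, set $A_i=\{f:\{v_i\}\cup f\in H\}$ over the remaining sides, let $\cc=\ci(L(\bigcup_i A_i))$, and apply \tref{t:defic1}. The resulting object is a structured fractional cover of total weight at most $\nu(H)$ that is already half-integral: an integral contribution $\sum\beta_u\chi_{\{u\}}$ with $\beta_u\in\{0,1\}$ living inside the side $V_1$, plus a fractional contribution that is a nonnegative combination of the $(r-1)$-sets $e\setminus V_1$. Since iterating \cjref{sideoredgegeneral} is exactly a decomposition of a cover into $\nu$ structured pieces, the task reduces to peeling off a single integral piece of this kind — a subset of one side, or a single $e\setminus\{x\}$ — that already drops the matching number by one.

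Third comes the crux, and here I would exploit a lever that \tref{t:fractionalstrong} wastes: to reduce $\nu$ we need only meet every \emph{maximum} matching, not cover every edge. I would therefore pass to the max-matching core $H^{\circ}$ consisting of the edges lying in some maximum matching, discard vertices off the core, and seek the blocker inside $H^{\circ}$. On the core every edge is critical and the extremal configuration is rigid, and the plan is to show that the fractional edge-part above, once restricted to $H^{\circ}$, is forced onto a single $e\setminus\{x\}$; combined with the side accounting of \lref{l:sidecover} this would deliver the required integral set. The hard part will be proving this rigidity: it amounts to an \emph{integral} strengthening of \tref{t:abm}, controlling the homology of $\ci(L(H^{\circ}))$ by an integral covering quantity rather than by the fractional $\tau_s$. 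The gap between $\tau_s$ and the integral covering number on the residual hypergraph is precisely the integrality gap at the core of Ryser's conjecture, and no existing connectivity estimate closes it; the whole attack stands or falls on establishing such a bound for the critically matched complex $\ci(L(H^{\circ}))$.

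As a hedge against the connectivity route I would in parallel run an extremal analysis of a minimal counterexample $H^{\circ}$. Assuming no admissible $S$ exists, for every edge $e$ and every $x\in e$ some maximum matching avoids $e\setminus\{x\}$, and every $(r-1)$-subset of each side misses some maximum matching. Feeding these avoidance conditions into counting arguments in the spirit of \lref{l:stndrdcnt} and \lref{l:sidecover} should force strong regularity of $H^{\circ}$ (bounded degrees, near-linearity), after which I would try to derive a contradiction from the same projective-plane-like rigidity that drives the small-$r$ results; a success here would likely also reveal the integral connectivity estimate that the topological route requires.
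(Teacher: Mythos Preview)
The statement you are attacking is a \emph{conjecture}; the paper does not prove it, and indeed it cannot be proved with current tools, since \cjref{sideoredgegeneral} immediately implies \cjref{lovasz}, which in turn implies Ryser's conjecture. What the paper does establish is only the fractional relaxation, \tref{lovaszfrac}, and even for that it does not use the topological deficiency machinery at all: it argues via the elementary linear-programming \lref{l:smallh} that some optimal fractional cover assigns weight $0$ to a vertex $v$, and then $S=e\setminus\{v\}$ already carries weight at least $1$, so deleting $S$ drops $\tau^*=\nu^*$ by at least $1$.

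Your proposal is candid about its own gap, and that candour is accurate. The step you label ``the crux'' --- upgrading \tref{t:abm} to an integral connectivity bound on $\ci(L(H^{\circ}))$ --- is not a technical lemma waiting to be filled in; it is exactly the integrality gap that separates the known fractional results from Ryser's conjecture itself. Restricting to the max-matching core $H^{\circ}$ does not tame this: for intersecting $H$ one has $H^{\circ}=H$, so already at $\nu=1$ your plan requires settling \cjref{sideoredge}, which is open. The parallel ``extremal hedge'' via \lref{l:stndrdcnt}-style counting is likewise only a heuristic; those lemmas give useful leverage for specific small $r$ but do not yield the general structural rigidity you need. In short, this is a reasonable sketch of where the difficulty lies, but it is a research programme rather than a proof, and there is no proof in the paper to compare it to.
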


In the fractional case it is enough to take sets of the second type, those contained in an edge. To show this, we first prove a Lemma that is stronger than we actually need, but could be of independent interest.

\begin{lemma}\label{l:smallh}
  In every $r$-partite hypergraph $H$ there exists an optimal
  fractional cover in which at most one side has positive weight
  on all of its vertices.
\end{lemma}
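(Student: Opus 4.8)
The plan is to argue by contradiction, starting from an optimal fractional cover $w$ that is extremal in some secondary sense, and to show that if two sides carry full-support weight one can push weight off one of them without increasing the total, contradicting extremality. Concretely, suppose $w=(w_v)_{v\in V(H)}$ is an optimal fractional cover, and among all optimal fractional covers choose one minimising the number of sides on which every vertex receives positive weight; call this quantity the \emph{bad count}. Assume for contradiction that the bad count is at least $2$, say both $V_1$ and $V_2$ have $w_v>0$ for every $v$ in them. The idea is to perturb by transferring a small amount of weight: for a real parameter $t$, decrease every vertex of $V_1$ by $t$ and, to compensate, look for a way to restore feasibility using weight that is \emph{not} spread over a whole side.

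First I would set up the perturbation carefully. Decreasing all of $V_1$ uniformly by $t$ lowers the total weight on \emph{every} edge by exactly $t$, since each edge meets $V_1$ once; the objective drops by $t|V_1|$. The cover constraint $\sum_{v\in e}w_v\ge 1$ becomes $\sum_{v\in e}w_v\ge 1+t$ in the perturbed picture, i.e.\ we now need a fractional cover of the hypergraph $H$ in which every edge is overcovered by $t$. Equivalently, after the uniform shift we must add back a nonnegative weight vector $z$ supported on $V\setminus V_1$ with $\sum_{v\in e}z_v\ge t$ for all $e$, i.e.\ $z/t$ is a fractional cover of $H$ using no vertex of $V_1$; such a cover exists because each side other than $V_1$ is itself a cover. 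The point is that we have freedom in how to choose $z$: we want to choose it so that $V_2$ does \emph{not} end up fully supported. Using a vertex cover argument on $V_2$ — split $V_2$ into vertices of $z$-cover-degree high and low, as in \lref{l:sidecover}-style reasoning — or simply by a basic-feasible-solution / vertex-of-polytope argument, one can select $z$ so that the set of edges missed by $\mathrm{supp}(z)\cap V_2$ is nonempty, hence at least one vertex of $V_2$ can be left at weight $0$ in the added part.

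The key inequality to control is the net change in the objective. Transferring weight $t$ off $V_1$ frees up $t|V_1|=t(r$-related$)$, and the correction $z$ costs $\sum_v z_v$. Choosing $z$ to be a \emph{minimum} fractional cover of $H$ avoiding $V_1$, its cost is $t\,\tau^*(H-V_1)\le t\cdot\min_{i\ne 1}|V_i|\le t|V_2|$. Since we also need $w_v-t\ge 0$ on $V_1$, take $t>0$ small enough that this holds and that the resulting weight on $V_1$ stays nonnegative; we may additionally take $t$ so that no new side becomes accidentally full-support. Then the new cover has total weight $\le (\text{old total}) - t|V_1| + t\cdot\tau^*(H-V_1)$, which is at most the old total because $\tau^*(H-V_1)\le |V_1|$ follows from optimality of $w$ (otherwise $w$ restricted appropriately would already not be optimal) — and in fact we only need it to be \emph{not larger}, so $w$ stays optimal, while strictly decreasing the bad count. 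This contradicts the choice of $w$, so the bad count is at most $1$, which is exactly the statement.

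The main obstacle I anticipate is the bookkeeping in the correction step: one must simultaneously (i) keep $z$ cheap enough that the objective does not increase, (ii) arrange that $z$ does not re-saturate a whole side — in particular not $V_2$ — and (iii) ensure $t$ can be taken small enough to preserve nonnegativity on $V_1$ and not create \emph{new} full-support sides elsewhere. Points (ii) and (iii) are the delicate ones; (ii) is handled by choosing $z$ to be an extreme point of the relevant cover polytope (so its support is small, certainly not all of a side when that side has more than $\tau$ vertices) or, in the degenerate case where every remaining side has exactly $\tau^*$ vertices and is forced to be fully used, by a direct argument that the original cover was then supported on a single side to begin with. Making the case analysis airtight — especially the boundary case where $\tau^*(H-V_1)=|V_1|=|V_2|$ — is where the real work lies; the perturbation itself is routine linear algebra.
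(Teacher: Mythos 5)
Your overall scheme (extremal choice of an optimal cover, uniform decrease on $V_1$, compensation by weight avoiding $V_1$) is workable, but as written it has two genuine gaps. First, the inequality you need to keep the perturbed cover optimal, $\tau^*(H-V_1)\le|V_1|$, does \emph{not} ``follow from optimality of $w$'' --- optimality gives exactly the reverse: since $w-t\chi_{V_1}+t z^*$ (with $z^*$ an optimal fractional cover of $H-V_1$) is again a fractional cover, optimality of $w$ forces $\tau^*(H-V_1)\ge|V_1|$. The inequality you want is true, but only because the two fully-positive sides must have \emph{equal} size, and that is itself a claim requiring proof: it is precisely the first step of the paper's argument, where one shifts $\epsilon$ uniformly from the larger fully-positive side to the smaller one (edge sums are unchanged because each edge meets each side exactly once, and full positivity gives room for the shift), so a strict size difference would contradict optimality. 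Your proposal never makes this size comparison (no WLOG $|V_1|\ge|V_2|$, no equality argument), so the cost control is unsupported. Second, the step that the ``bad count'' strictly decreases is not established. Note that adding $z\ge 0$ can never de-saturate $V_2$, so your worry about $V_2$ ``ending up fully supported'' is moot --- $V_2$ stays bad no matter what; the only side that can lose full support is $V_1$ (take $t=\min_{v\in V_1}w_v$). The real danger is that $z$ newly saturates some previously good side, increasing the bad count, and your proposed remedy (take $z$ a vertex of the covering polytope, so its support is ``small'') does not address this: there is no a priori reason a basic optimal cover of $H-V_1$ misses a vertex in every side --- indeed that is morally the content of the lemma itself, so the argument risks circularity.

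Both gaps close simultaneously if you abandon the general $z$ and take $z=t\chi_{V_2}$ with WLOG $|V_1|\ge|V_2|$: then the perturbation $w-t\chi_{V_1}+t\chi_{V_2}$ changes no edge sum, touches no side other than $V_1,V_2$, and has total $\sum w+t(|V_2|-|V_1|)$; optimality forces $|V_1|=|V_2|$, and with $t=\min_{V_1}w_v$ the bad count drops by one, a contradiction. This is essentially the paper's proof, except that the paper replaces your ``minimise the bad count'' extremal choice by taking the optimum at a vertex of the covering polytope $Q$ and observing that, once $|V_i|=|V_j|$, the cover is the midpoint of the two covers $\vec u\pm\epsilon(\chi_{V_i}-\chi_{V_j})$, contradicting extremality. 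So the correct core idea (uniform transfer between the two sides, exploiting that every edge meets each side exactly once) is visible in your sketch, but the specific justifications you give for the two critical steps are respectively backwards and insufficient.
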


\begin{proof}
  Associating each dimension of $\R^n$ with a vertex of $H$, let $Q$
  be the polytope in $\R^n$ defined by $\vec{w}\chi_e \ge 1$ for all
  $e \in H$. Then $\tau^*(H)=\min\{\vec{w}\cdot\vec{1} \mid \vec{w}
  \in Q\}$ and the minimum is attained at a vertex $\vec{u}$ of $Q$.
  Suppose that
  there exist two distinct sides $V_i,\,V_j$ of $H$ such that
  $u(v)>0$ for every $v \in V_i \cup V_j$. We claim that
  $|V_i|=|V_j|$. To see this, assume to the contrary that (say)
  $|V_i|<|V_j|$. Now choose a positive
  $\epsilon\le\min\{u(v) \mid v \in V_j\}$,
  and define $u'(v)=u(v)-\epsilon$ for
  $v \in V_j$, $u'(v)=u(v)+\epsilon$ for $v \in V_i$, and
  $u'(v)=u(v)$ for $v \not \in V_i \cup V_j$. Then $\vec{u}'$ is a
  fractional cover of smaller size, contradicting the minimality
  property of $\vec{u}$.  Having shown that $|V_i|=|V_j|$, we now take
  a number $\epsilon >0$ smaller than $\min\{u(v) \mid v \in V_i \cup
  V_j\}$, and note that $\vec{u}=({\vec{u}'+\vec{u}''})/{2}$, where
  $\vec{u}':=\vec{u}+\epsilon \chi_{V_i}-\epsilon \chi_{V_j}$ and
  $\vec{u}'':=\vec{u}-\epsilon \chi_{V_i}+\epsilon \chi_{V_j}$ are both
  fractional covers. This contradicts the fact that $\vec{u}$ is a
  vertex of $Q$.

 We have shown that at least $r-1$ of the sides $V_i$ of $H$ contain a
 vertex $v$ of $H$ for which $u(v)=0$.
\end{proof}

\begin{theorem}\label{lovaszfrac}
  In an $r$-partite hypergraph $H$ there exists a set
  $S=e\setminus\{v\}$ for some $v \in e \in H$, such that
  $\nu^*(H-S) \le \nu^*(H)-1$.
\end{theorem}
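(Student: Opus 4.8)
The plan is to reduce the statement to a fact about the fractional \emph{covering} number and then apply \lref{l:smallh}. Recall that for every hypergraph $\nu^*=\tau^*$ by linear programming duality, and likewise $\nu^*(H-S)=\tau^*(H-S)$; so it suffices to produce $S=e\setminus\{v\}$ with $\tau^*(H-S)\le\tau^*(H)-1$. Before doing so I would dispose of trivia: if $H$ has no edge the assertion is vacuous, and since deleting isolated vertices alters neither $\nu^*(H)$ nor the family of admissible pairs $(e,v)$ with $v\in e\in H$, I may assume $H$ has no isolated vertex.

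Next, apply \lref{l:smallh} to obtain an optimal fractional cover $\vec u$ of $H$ in which at most one side carries positive weight on every one of its vertices. In particular some side $V_j$ contains a vertex $v$ with $u(v)=0$, and since $H$ has no isolated vertices this $v$ lies on some edge $e\in H$. Set $S=e\setminus\{v\}$.

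It remains to verify that restricting $\vec u$ to $V(H)\setminus S$ yields a fractional cover of $H-S$ of weight at most $\tau^*(H)-1$. Every edge of $H-S$ is an edge of $H$ that avoids $S$, hence its $\vec u$-weight is unchanged and still at least $1$, so the restriction is indeed a fractional cover of $H-S$. Its total weight equals $\tau^*(H)-\sum_{x\in S}u(x)=\tau^*(H)-\bigl(\sum_{x\in e}u(x)-u(v)\bigr)\le\tau^*(H)-1$, using $u(v)=0$ and $\sum_{x\in e}u(x)\ge1$. Therefore $\nu^*(H-S)=\tau^*(H-S)\le\tau^*(H)-1=\nu^*(H)-1$, as required.

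The substantive content — and the whole reason \lref{l:smallh} is invoked — is the guarantee that some side has a zero-weight vertex in an optimal cover; without $r$-partiteness this may fail (for instance a projective plane admits a uniform, everywhere-positive optimal cover), so this is exactly where the argument would break if one tried to bypass the lemma. The only point requiring a little care is ensuring that this zero-weight vertex actually lies on an edge, which is precisely why isolated vertices are stripped off at the outset; everything else is routine bookkeeping with the covering LP.
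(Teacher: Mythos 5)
Your proposal is correct and follows essentially the same route as the paper: apply \lref{l:smallh} to get an optimal fractional cover with a zero-weight vertex $v$, take $S=e\setminus\{v\}$ for an edge $e$ through $v$, and observe that the restricted cover witnesses $\tau^*(H-S)\le\tau^*(H)-1$, with LP duality converting this to the statement about $\nu^*$. Your explicit handling of isolated vertices and of the duality step merely makes precise what the paper leaves implicit.
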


\begin{proof}
By \lref{l:smallh} there exists an
optimal fractional cover $u$ of $H$ and a vertex $v$ such that
$u(v)=0$. Let $e$ be any edge of $H$ containing $v$, and let
$S=e\setminus \{v\}$.  Since $u$ is a fractional cover and $u(v)=0$,
the weight of $u$ on $S$ is at least $1$.
Clearly, $u$ restricted to $V\setminus S$ is a
fractional cover for $H-S$, proving the theorem.
\end{proof}

\subsection*{Acknowledgement}

We thank Abu-Khazneh and Pokrovskiy \cite{AP} for sharing their paper
and their ideas with us.

  \let\oldthebibliography=\thebibliography
  \let\endoldthebibliography=\endthebibliography
  \renewenvironment{thebibliography}[1]{%
    \begin{oldthebibliography}{#1}%
      \setlength{\parskip}{0.4ex plus 0.1ex minus 0.1ex}%
      \setlength{\itemsep}{0.4ex plus 0.1ex minus 0.1ex}%
  }%
  {%
    \end{oldthebibliography}%
  }

\end{document}